\newtheorem{theorem}{\textsf{Theorem}}
\newtheorem{lemma}{\textsf{Lemma}}
\newtheorem{proposition}{\textsf{Proposition}}
\newenvironment{proof}[1][\textsf{Proof. }]{\textbf{#1}}{$\square$}
\newenvironment{remark}[1][\textsf{Remark. }]{\textbf{#1}}{}
\begin{document}

\title{
\date{}
{
\large \textsf{\textbf{The dual Jacobian of a generalised hyperbolic tetrahedron,\\ and volumes of prisms}}
}
}
\author{\small Alexander Kolpakov \hspace*{42pt}
\small Jun Murakami}
\maketitle

\begin{abstract}\noindent

We derive an analytic formula for the dual Jacobian matrix of a generalised hyperbolic tetrahedron. Two cases are considered: a mildly truncated and a prism truncated tetrahedron. The Jacobian for the latter arises as an analytic continuation of the former, that falls in line with a similar behaviour of the corresponding volume formulae.

Also, we obtain a volume formula for a hyperbolic $n$-gonal prism: the proof requires the above mentioned Jacobian, employed in the analysis of the edge lengths behaviour of such a prism, needed later for the Schl\"{a}fli formula.

\medskip
{\textsf{\textbf{Key words}}: hyperbolic polyhedron, Gram matrix, volume.}
\end{abstract}

\parindent=0pt 

\section{Introduction}\label{introduction}

Let $T$ be a generalised hyperbolic tetrahedron (in the sense of \cite{MU, U}) depicted in Fig.~\ref{fig_tetrahedron}. If the truncating planes associated with its ultra-ideal vertices do not intersect, we call such a tetrahedron \textit{mildly truncated}, otherwise we call it \textit{intensely truncated}. If only two of them intersect, we call such a tetrahedron \textit{prism truncated} \cite{KM2012}. Let us note that a prism truncated orthoscheme is, in fact, a Lambert cube \cite{K}.

The volumes of the tetrahedron and its truncations are of particular interest, since they are the simplest representatives of hyperbolic polyhedra. Over the last decade an extensive study produced a number of volume formulae suitable for analytic and numerical exploration \cite{ChoKim, DM, K, KM2012, MY, U}. 
A similar study was done for the spherical tetrahedron \cite{KMP, M2012}, which can be viewed as a natural counterpart of the hyperbolic one. Many analytic properties of the volume formula for a hyperbolic tetrahedron came into view concerning the Volume Conjecture \cite{Kashaev, HMJM}.

\begin{figure}[ht]
\begin{center}
\includegraphics* [totalheight=5cm]{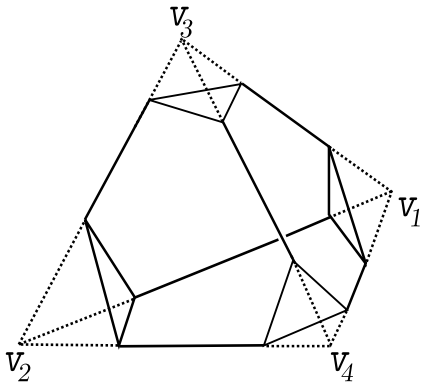}
\end{center}
\caption{Generalised hyperbolic tetrahedron} \label{fig_tetrahedron}
\end{figure}

However, other geometric characteristics of a generalised hyperbolic tetrahedron $T$ are also important and bring some useful information. In particular, $\mathrm{Jac}(T)$, the Jacobian of $T$, which is the Jacobian matrix of the edge length with respect to the dihedral angles, is such. This matrix enjoys many symmetries \cite{Luo} and can be computed out of the Gram matrix of $T$ \cite{Guo}.

In the present paper, we consider $\mathrm{Jac}^{\star}(T)$, \textit{the dual Jacobian} of a generalised hyperbolic tetrahedron $T$. By the dual Jacobian of $T$ we mean the Jacobian matrix of the dihedral angles with respect to the edge length. Such an object behaves nicely when $T$ undergoes both mild and intense truncation: the dual Jacobian of a prism truncated tetrahedron is an analytic continuation for that of a mildly doubly truncated one. Let us mention, that the respective volume formulae are also connected by an analytic continuation, in an analogous manner \cite{KM2012, MU}.

As an application of our technique, we give a volume formula for a hyperbolic $n$-gonal prism, c.f. \cite{DK}.

\medskip

\textbf{Acknowledgements.} The authors gratefully acknowledge financial support provided by the Swiss National Science Foundation (SNSF project no.~P300P2-151316) and the Japan Society for the Promotion of Science (Grant-in-Aid projects no.~25287014, no.~2561002 and Invitation Programs for Research project no.~S-14021). The authors thank the anonymous referee for his/her careful reading of the manuscript and helpful comments. 

\section{Preliminaries}\label{preliminaries}

Let $T$ be a mildly truncated hyperbolic tetrahedron with vertices $\mathrm{v}_k$, $k\in\{1,2,3,4\}$, edges $e_{ij}$ (connecting the vertices $\mathrm{v}_i$ and $\mathrm{v}_j$) with dihedral angles $a_{ij}$ and lengths $\ell_{ij}$, $i,j\in \{1,2,3,4\}$, $i<j$. 

Depending on whether the vertex $\mathrm{v}_k$ is proper ($\mathrm{v}_k \in \mathbb{H}^3$), ideal ($\mathrm{v}_k \in \partial\mathbb{H}^3$) or ultra-ideal ($\mathrm{v}_k$ defines a polar hyperplane as described in \cite[Section 3]{Ratcliffe}, c.f. Theorem 3.2.12), let us set the quantity $\varepsilon_k$ to be $+1$, $0$ or $-1$, respectively. 

For each vertex $\mathrm{v}_i$ of $T$ let us consider the face $F_{jkl}$ opposite to it, where $\{i,j,k,l\} = \{1,2,3,4\}$. The link $L(\mathrm{v}_l)$ of the vertex $\mathrm{v}_l$ is either a spherical triangle ($\varepsilon_l = +1$), a Euclidean triangle ($\varepsilon_l = 0$) or a hyperbolic triangle ($\varepsilon_l = -1$). Let us define the quantity $b^{i}_{jk}$ as follows:
\begin{equation*}
b^{i}_{jk} := \left\{\begin{array}{l}
\mbox{the plane angle of $F_{jkl}$ opposite to the edge $e_{jk}$, if $\varepsilon_l = +1$};\\
\mbox{zero, if $\varepsilon_l = 0$};\\
\mbox{the length of the common perpendicular to the edges $e_{jl}$}\\
\mbox{and $e_{kl}$ of $F_{jkl}$, if $\varepsilon_l = -1$}.
\end{array}\right.
\end{equation*}

Here, we consider the face $F_{jkl}$ as a generalised hyperbolic triangle, for which the trigonometric laws hold as described in \cite{Cho, LuoGuo}.

Let us also define a quantity $\mu^{i}_{jk}$ by means of the formula 
\begin{equation*}
\mu^{i}_{jk} := \int^{b^{i}_{jk}}_{0} \cos(\sqrt{\varepsilon_l}s) \mathrm{d}s.
\end{equation*}
Let ${\mu^{\prime}}^{i}_{jk}$ denote the derivative of $\mu^{i}_{jk}$ with respect to $b^{i}_{jk}$, which means that
\begin{equation*}
{\mu^{\prime}}^{i}_{jk} = \cos(\sqrt{\varepsilon_l}b^{i}_{jk}).
\end{equation*}
Let $\sigma_{kl}$ denote the following quantity associated with an edge $e_{kl}$, $k,l\in\{1,2,3,4\}$, $k<l$,
\begin{equation*}
\sigma_{kl} := \frac{1}{2} e^{\ell_{kl}} - \frac{1}{2} \varepsilon_k \varepsilon_l e^{-\ell_{kl}}.
\end{equation*}
Let $\sigma^{\prime}_{kl}$ denote the derivative of $\sigma_{kl}$ with respect to $\ell_{kl}$, so we have that
\begin{equation*}
\sigma^{\prime}_{kl} = \frac{1}{2} e^{\ell_{kl}} + \frac{1}{2} \varepsilon_k \varepsilon_l e^{-\ell_{kl}}.
\end{equation*}

Let us define the momentum $M_i$ of the vertex $\mathrm{v}_i$ opposite to the face $F_{jkl}$, $\{i,j,k,l\} = \{1,2,3,4\}$ by the following equality (c.f.~\cite[VII.6]{F}):
\begin{equation*}
M^{i} := \mu^{i}_{jk}\,\mu^{i}_{jl}\,\sigma_{kl}.
\end{equation*}

The quantity above is well defined grace to the following theorem.
\begin{theorem}[The Sine Law for faces]\label{thm_sine_face}
Let $F_{jkl}$ be the face of $T$ opposite to the vertex $\mathrm{v}_i$, $\{i,j,k,l\} = \{1,2,3,4\}$. Then $F_{jkl}$ is a generalised hyperbolic triangle and the following equalities hold:
\begin{equation*}
\frac{\mu^{i}_{jk}}{\sigma_{jk}} = \frac{\mu^{i}_{jl}}{\sigma_{jl}} = \frac{\mu^{i}_{kl}}{\sigma_{kl}}.
\end{equation*}
\end{theorem}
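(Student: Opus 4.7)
The plan is to reduce the stated identity to the generalised sine law for a hyperbolic triangle whose vertices may be proper, ideal, or ultra-ideal, as treated in \cite{Cho, LuoGuo}, by unpacking the three-case definitions of $\mu^{i}_{jk}$ and $\sigma_{kl}$ into a uniform trigonometric statement. First I would observe that
\begin{equation*}
\mu^{i}_{jk}=\begin{cases}\sin b^{i}_{jk}, & \varepsilon_l=+1,\\ 0, & \varepsilon_l=0,\\ \sinh b^{i}_{jk}, & \varepsilon_l=-1,\end{cases}\qquad
\sigma_{kl}=\begin{cases}\sinh\ell_{kl}, & \varepsilon_k\varepsilon_l=+1,\\ \tfrac12 e^{\ell_{kl}}, & \varepsilon_k\varepsilon_l=0,\\ \cosh\ell_{kl}, & \varepsilon_k\varepsilon_l=-1.\end{cases}
\end{equation*}
Thus the claimed equality is exactly the sine law for $F_{jkl}$ written in a form that does not distinguish between the types of its three vertices, with the side quantity at $e_{kl}$ determined by $(\varepsilon_k,\varepsilon_l)$ and the opposite ``angle'' quantity at $\mathrm{v}_l$ determined by $\varepsilon_l$.

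Next, I would verify the identity by case analysis on $(\varepsilon_j,\varepsilon_k,\varepsilon_l)$, modulo the obvious $S_3$ symmetry. The all-proper case is the classical hyperbolic sine law. When one vertex, say $\mathrm{v}_l$, is ultra-ideal, the face $F_{jkl}$ is truncated along the polar geodesic of $\mathrm{v}_l$, producing a right-angled generalised triangle; here $b^{i}_{jk}$ is the length of the truncating segment, and the required equality $\sinh b^{i}_{jk}/\sinh\ell_{jk}=\sinh b^{i}_{jl}/\sinh\ell_{jl}$ is the sine rule for such a truncated triangle recorded in \cite{Cho, LuoGuo}. For two or three ultra-ideal vertices the corresponding $\sigma$-values switch between $\sinh$ and $\cosh$ in the manner dictated by which endpoint of the edge is being truncated, and each resulting identity is again a consequence of the trigonometry of right-angled hyperbolic hexagons / Lambert-type figures in \cite{Cho, LuoGuo}.

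The main obstacle is the behaviour at ideal vertices. When $\varepsilon_l=0$ the numerator $\mu^{i}_{jk}$ vanishes, so we must argue that $\mu^{i}_{jl}$ and $\mu^{i}_{kl}$ also vanish, i.e.\ that all three ratios simultaneously drop to zero. I would handle this either by a direct appeal to the ideal-vertex sine law in the cited works (where an ideal apex forces the opposite face quantity to degenerate), or, more uniformly, by a continuity/analyticity argument: since $\mu^{i}_{jk}=\int_0^{b^{i}_{jk}}\cos(\sqrt{\varepsilon_l}s)\,ds$ depends analytically on the parameter $\varepsilon_l$, the identity can be obtained by letting $\varepsilon_l\to 0$ from either the proper or the ultra-ideal side, where it is already established. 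The mixed cases involving simultaneously ideal and ultra-ideal vertices are treated by the same limiting procedure applied component-wise.

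Once all configurations are verified, the two equalities of the theorem hold, which in particular implies the symmetry $\mu^{i}_{jk}\mu^{i}_{jl}\sigma_{kl}=\mu^{i}_{jk}\mu^{i}_{kl}\sigma_{jl}=\mu^{i}_{jl}\mu^{i}_{kl}\sigma_{jk}$, so that $M^{i}$ depends only on the vertex $\mathrm{v}_i$ and not on the chosen labelling of the three opposite indices $\{j,k,l\}$.
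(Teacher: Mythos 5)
Your overall strategy coincides with the paper's: the authors give no proof of Theorem~\ref{thm_sine_face} beyond the remark following Theorem~\ref{thm_sine_link} that both are paraphrases of the sine laws for a generalised triangle, with a pointer to \cite{Cho, LuoGuo}. Your dictionary $\mu^{i}_{jk}=\sin b^{i}_{jk}$, $0$, $\sinh b^{i}_{jk}$ and $\sigma_{kl}=\sinh\ell_{kl}$, $\tfrac12 e^{\ell_{kl}}$, $\cosh\ell_{kl}$ according to the signs is the correct unpacking, and the case-by-case reduction to the cited trigonometric identities is exactly the intended argument. (One indexing slip in the prose: the vertex of $F_{jkl}$ opposite the edge $e_{kl}$ is $\mathrm{v}_j$, not $\mathrm{v}_l$, so the quantity $\mu^{i}_{kl}$ is governed by $\varepsilon_j$.)

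Two of your concrete instances are, however, wrong as written. First, in the configuration where only $\mathrm{v}_l$ is ultra-ideal and $\mathrm{v}_j,\mathrm{v}_k$ are proper, $b^{i}_{jl}$ is the plane angle at the proper vertex $\mathrm{v}_k$ and $\varepsilon_j\varepsilon_l=-1$, so by your own table the identity to be verified is $\sinh b^{i}_{jk}/\sinh\ell_{jk}=\sin b^{i}_{jl}/\cosh\ell_{jl}$, not $\sinh b^{i}_{jl}/\sinh\ell_{jl}$; the latter is not the sine rule of a singly truncated triangle. Second, when $\varepsilon_l=0$ it is not true that $\mu^{i}_{jl}$ and $\mu^{i}_{kl}$ must vanish: if $\mathrm{v}_j$ and $\mathrm{v}_k$ are proper these are sines of genuine nonzero angles. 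The three ratios do all degenerate to zero, but by a different mechanism --- the edges $e_{jl}$ and $e_{kl}$ emanating from an ideal vertex have infinite length, so $\sigma_{jl}$ and $\sigma_{kl}$ blow up, while $\mu^{i}_{jk}=0$ kills the remaining ratio. Relatedly, the proposed limit $\varepsilon_l\to 0$ needs more care than stated, since $\varepsilon_l$ is a discrete label attached to the vertex type rather than a continuous modulus of a fixed triangle; a genuine degeneration argument would have to move the vertex to the ideal boundary and track $b^{i}_{jk}$ and the $\ell$'s simultaneously. None of this affects the generic cases, which are the ones the paper actually uses (the proof of Theorem~\ref{thm_jacobian1} explicitly assumes $\varepsilon_i\neq 0$), but the two displayed claims should be corrected before the case analysis can be called complete.
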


Let us also define the momentum $M_{jkl}$ of the face $F_{jkl}$ opposite to the vertex $\mathrm{v}_i$, $\{i,j,k,l\}=\{1,2,3,4\}$ by setting (c.f.~\cite[VII.6]{F})
\begin{equation*}
M_{jkl} := \mu^{j}_{kl} \sin a_{ik} \sin a_{il}.
\end{equation*}

The quantity above is well defined, according to the following theorem.
\begin{theorem}[The Sine Law for links]\label{thm_sine_link}
Let $\mathrm{v}_i$ be the vertex of $T$ opposite to the face $F_{jkl}$, $\{i,j,k,l\} = \{1,2,3,4\}$. Then $L(\mathrm{v}_i)$ is either a spherical, a Euclidean or a hyperbolic triangle and the following equalities hold:
\begin{equation*}
\frac{\sin a_{ij}}{\mu^{j}_{kl}} = \frac{\sin a_{ik}}{\mu^{k}_{jl}} = \frac{\sin a_{il}}{\mu^{l}_{jk}}.
\end{equation*}
\end{theorem}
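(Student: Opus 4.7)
The idea is to realise $L(\mathrm{v}_i)$ explicitly as a triangle in the two-dimensional geometry of constant curvature $\varepsilon_i$, identify its angles and sides with the data appearing in the statement, and then invoke the classical sine law in that geometry.

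First, I would cut $T$ near $\mathrm{v}_i$ by the natural cross-section through that vertex: a small sphere about $\mathrm{v}_i$ when $\varepsilon_i = +1$, a horosphere based at $\mathrm{v}_i$ when $\varepsilon_i = 0$, and the polar plane of $\mathrm{v}_i$ when $\varepsilon_i = -1$. In each case the intersection with $T$ is a triangle $L(\mathrm{v}_i)$ in the sphere, Euclidean plane, or hyperbolic plane, respectively; its three vertices correspond to the edges $e_{ij}, e_{ik}, e_{il}$ of $T$ meeting at $\mathrm{v}_i$, and its three sides to the faces $F_{ijk}, F_{ijl}, F_{ikl}$ clustering around that vertex.

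Next, I would match the combinatorial labels with the geometry of $L(\mathrm{v}_i)$. The interior angle of $L(\mathrm{v}_i)$ at the vertex coming from $e_{ij}$ is the dihedral angle $a_{ij}$, since the two faces of $T$ sharing the edge $e_{ij}$ cut $L(\mathrm{v}_i)$ in two arcs whose angle of intersection is precisely $a_{ij}$. The side of $L(\mathrm{v}_i)$ opposite this vertex lies in the remaining face $F_{ikl}$ (opposite to $\mathrm{v}_j$), and its ``size'' equals $b^{j}_{kl}$: for $\varepsilon_i = +1$ it is the plane angle of $F_{ikl}$ at $\mathrm{v}_i$, while for $\varepsilon_i = -1$ the polar plane cuts $e_{ik}$ and $e_{il}$ orthogonally, so that side is the common perpendicular of length $b^{j}_{kl}$.

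Once these identifications are in place, I would apply the classical sine law to $L(\mathrm{v}_i)$. For $\varepsilon_i = +1$ this is $\sin a_{ij}/\sin b^{j}_{kl} = \sin a_{ik}/\sin b^{k}_{jl} = \sin a_{il}/\sin b^{l}_{jk}$, and for $\varepsilon_i = -1$ the same with $\sinh$ in place of $\sin$. A direct computation shows that $\int_{0}^{b}\cos(\sqrt{\varepsilon_i}\,s)\,\mathrm{d}s$ evaluates to $\sin b$ in the first case and $\sinh b$ in the second (using $\cos(is)=\cosh s$), which is by definition $\mu^{j}_{kl}$ and its cyclic variants, producing the stated equalities. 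The Euclidean limit $\varepsilon_i = 0$ is degenerate: both $b^{j}_{kl}$ and $\mu^{j}_{kl}$ vanish while the link itself is a Euclidean triangle defined only up to an overall scale set by the horosphere, so the identity holds trivially as $\sin a_{ij}\cdot 0 = \sin a_{ik}\cdot 0 = 0$, with the non-trivial scale-free content recovered in the limit $\varepsilon_i \to 0$.

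The main obstacle is not analytic but notational: the upper index of $b^{j}_{kl}$ refers to the vertex of $T$ \emph{opposite} to the face in which $b^{j}_{kl}$ lives, the lower indices single out an edge of that face, and the case distinction is governed by a third, implicit vertex whose type controls $\varepsilon_i$. A momentary confusion in this labelling swaps the intended matching of the sides of $L(\mathrm{v}_i)$ with the $\mu$'s in the theorem. Once the identifications are set correctly, the argument reduces to a single invocation of the classical sine law in the sphere or the hyperbolic plane.
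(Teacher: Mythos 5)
Your proposal is correct and follows essentially the same route as the paper, which offers no separate argument but simply observes that Theorem~\ref{thm_sine_link} is a paraphrase of the spherical, Euclidean or hyperbolic sine law applied to the vertex link (citing \cite{LuoGuo}); your write-up just makes the identification of the link's angles with the $a_{ij}$ and its sides with the $b^{j}_{kl}$ explicit, together with the observation that $\mu^{j}_{kl}$ equals $\sin b^{j}_{kl}$ or $\sinh b^{j}_{kl}$ according to $\varepsilon_i$. The labelling caveat you raise is exactly the right one, and your reading of the indices is consistent with the paper's conventions.
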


Both Theorem~\ref{thm_sine_face} and Theorem~\ref{thm_sine_link} are paraphrases of the spherical, Euclidean or hyperbolic sine laws (for a generalised hyperbolic triangle, see \cite{LuoGuo}). The following theorems are the cosine laws for a generalised hyperbolic triangle adopted to the notation of the present paper.

\begin{theorem}[The first Cosine Law for faces]
Let $F_{jkl}$ be the face of $T$ opposite to the vertex $\mathrm{v}_i$, $\{i,j,k,l\} = \{1,2,3,4\}$. Then $F_{jkl}$ is a generalised hyperbolic triangle and the following equality holds:
\begin{equation*}
\sigma^{\prime}_{kl} = \frac{{\mu^{\prime}}^{i}_{kl} + {\mu^{\prime}}^{i}_{jk}\, {\mu^{\prime}}^{i}_{jl}}{\mu^{i}_{jk}\, \mu^{i}_{jl}}.
\end{equation*}
\end{theorem}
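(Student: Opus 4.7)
The identity is nothing but the dual (second) cosine law for the generalised hyperbolic triangle $F_{jkl}$, repackaged in the uniform $\sigma$--$\mu$ notation of Section~\ref{preliminaries}. My strategy is to invoke the cosine laws for generalised hyperbolic triangles collected in \cite{Cho, LuoGuo} and unwind the notation.

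First, I would decode both sides. Because $\sigma_{kl} = \tfrac{1}{2}e^{\ell_{kl}} - \tfrac{1}{2}\varepsilon_k\varepsilon_l e^{-\ell_{kl}}$, the quantity $\sigma'_{kl}$ equals $\cosh\ell_{kl}$ when $\varepsilon_k\varepsilon_l = +1$ (the two endpoints of $e_{kl}$ being of the same type, both proper or both ultra-ideal) and $\sinh\ell_{kl}$ when $\varepsilon_k\varepsilon_l = -1$, while $\sigma_{kl}$ becomes the complementary function. Likewise, for the vertex $\mathrm{v}_l$ opposite the side $e_{jk}$ of $F_{jkl}$, the definition $\mu^{i}_{jk} = \int_0^{b^{i}_{jk}}\cos(\sqrt{\varepsilon_l}\,s)\,ds$ reduces to $\sin$/$\cos$ of an interior angle at $\mathrm{v}_l$ when $\varepsilon_l = +1$ and to $\sinh$/$\cosh$ of the common perpendicular length associated with $\mathrm{v}_l$ when $\varepsilon_l = -1$.

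Second, I would verify the statement in the generic fully proper case $\varepsilon_j = \varepsilon_k = \varepsilon_l = +1$: it reads
\begin{equation*}
\cosh\ell_{kl} \;=\; \frac{\cos b^{i}_{kl} + \cos b^{i}_{jk}\cos b^{i}_{jl}}{\sin b^{i}_{jk}\sin b^{i}_{jl}},
\end{equation*}
which is exactly the standard second hyperbolic cosine law for the triangle $F_{jkl}$, with $\ell_{kl}$ the side opposite $\mathrm{v}_j$ and $b^{i}_{kl}, b^{i}_{jl}, b^{i}_{jk}$ the interior angles at $\mathrm{v}_j, \mathrm{v}_k, \mathrm{v}_l$ respectively. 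For each vertex with $\varepsilon = -1$ the truncated-triangle cosine law of \cite{Cho, LuoGuo} applies with $\sin,\cos$ at that vertex replaced by $\sinh,\cosh$ and the associated angle replaced by the appropriate common-perpendicular length; simultaneously the left-hand side becomes $\sinh\ell_{kl}$ whenever $\varepsilon_k\varepsilon_l = -1$. Because the identities $\cos(\sqrt{-1}\,x) = \cosh x$ and $\sin(\sqrt{-1}\,x) = \sqrt{-1}\sinh x$ are baked directly into the definitions of $\mu,\mu',\sigma,\sigma'$, all of these individual laws collapse into the single uniform formula asserted.

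The step that will require the most care is the ideal configuration in which one of the $\varepsilon$'s vanishes: with $\varepsilon_l = 0$, say, the convention $b^{i}_{jk} = 0$ forces a $0/0$ indeterminacy on the right-hand side while simultaneously $\ell_{kl}$ becomes infinite and the left-hand side diverges. I would handle this by continuity: realise the ideal triangle as the limit of a one-parameter family of proper ones, use the Sine Law of Theorem~\ref{thm_sine_face} to rewrite $\mu^{i}_{jk}\mu^{i}_{jl}$ as $(\sigma_{jk}\sigma_{jl}/\sigma_{kl}^2)(\mu^{i}_{kl})^2$ so that the ratio on the right is manifestly well-behaved in the limit, and pass to $\varepsilon_l \to 0$. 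The remaining work is the routine bookkeeping of signs across the finitely many $\varepsilon$-configurations.
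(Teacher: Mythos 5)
Your proposal matches the paper's treatment: the paper gives no independent proof of this theorem, presenting it (together with the other cosine laws) as a restatement of the known trigonometric laws for generalised hyperbolic triangles from \cite{Cho, LuoGuo}, ``adopted to the notation of the present paper,'' which is exactly the unwinding of the $\sigma$--$\mu$ definitions you carry out. Your extra care for the ideal case $\varepsilon_l=0$ goes beyond what the paper records (note only that the degeneracy there is of the form $\text{finite}/0$ against a divergent left-hand side, not strictly $0/0$), but this does not change the substance.
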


\begin{theorem}[The second Cosine Law for faces]
Let $F_{jkl}$ be the face of $T$ opposite to the vertex $\mathrm{v}_i$, $\{i,j,k,l\} = \{1,2,3,4\}$. Then $F_{jkl}$ is a generalised hyperbolic triangle and the following equality holds:
\begin{equation*}
{\mu^{\prime}}^{i}_{jk} = \frac{-\varepsilon_l \sigma^{\prime}_{jk} + \sigma^{\prime}_{jl} \sigma^{\prime}_{kl}}{\sigma_{jl} \sigma_{kl}}.
\end{equation*}
\end{theorem}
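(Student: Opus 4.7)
The plan is to establish the identity by reducing it, separately in each of the three cases $\varepsilon_l = +1, 0, -1$, to the classical first cosine law for a spherical, Euclidean or hyperbolic triangle, in the generalised form recorded in \cite{LuoGuo, Cho}. The key observation is that the sign factor $-\varepsilon_l$ on the right-hand side selects exactly the curvature of the link $L(\mathrm{v}_l)$ and therefore the geometric meaning of $b^{i}_{jk}$ (an angle, zero, or a perpendicular length).

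First, suppose $\varepsilon_l = +1$ and all three vertices of $F_{jkl}$ are proper. Then ${\mu^{\prime}}^{i}_{jk} = \cos b^{i}_{jk}$ is the cosine of the plane angle of $F_{jkl}$ at $\mathrm{v}_l$, each $\sigma$-quantity equals $\sinh$ of the corresponding edge length and each $\sigma^{\prime}$-quantity the corresponding $\cosh$, so the identity reduces to
\begin{equation*}
\cos b^{i}_{jk} \;=\; \frac{\cosh \ell_{jl}\,\cosh \ell_{kl} - \cosh \ell_{jk}}{\sinh \ell_{jl}\,\sinh \ell_{kl}},
\end{equation*}
which is the first hyperbolic cosine law at $\mathrm{v}_l$. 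When $\mathrm{v}_j$ or $\mathrm{v}_k$ is ideal or ultra-ideal the corresponding edge should be treated with an imaginary shift of $i\pi/2$ or $i\pi$ in its length; the products $\varepsilon_j \varepsilon_l$ and $\varepsilon_k \varepsilon_l$ in the definitions of $\sigma$ and $\sigma^{\prime}$ absorb exactly these shifts, and the resulting identity is the generalised cosine law of \cite{LuoGuo}.

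Second, if $\varepsilon_l = 0$ then by definition $b^{i}_{jk} = 0$ and hence ${\mu^{\prime}}^{i}_{jk} = 1$. On the right-hand side the term $-\varepsilon_l\,\sigma^{\prime}_{jk}$ vanishes, while the factor $\varepsilon_l = 0$ forces $\sigma_{jl} = \sigma^{\prime}_{jl} = \tfrac{1}{2}e^{\ell_{jl}}$ and similarly for $kl$, so the ratio is identically $1$. This matches the fact that the link of an ideal vertex is Euclidean and contributes no curvature.

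Finally, if $\varepsilon_l = -1$ then $\mathrm{v}_l$ is ultra-ideal, $b^{i}_{jk}$ is the length of the common perpendicular between the lines carrying $e_{jl}$ and $e_{kl}$, and ${\mu^{\prime}}^{i}_{jk} = \cos(i\,b^{i}_{jk}) = \cosh b^{i}_{jk}$. The identity then becomes the Lambert-type cosine relation between this perpendicular distance and the edges of the truncated triangle at $\mathrm{v}_l$, which is again among the generalised cosine laws of \cite{LuoGuo}. The principal obstacle is purely the bookkeeping across the nine subcases $(\varepsilon_j,\varepsilon_k)\in\{-1,0,+1\}^2$ in each of the three branches, but the quantities $\sigma$ and $\mu$ have been set up precisely so that all of these subcases collapse into the single unified formula after the routine substitutions above.
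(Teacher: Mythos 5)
Your proposal is correct and matches the paper's treatment: the paper gives no independent proof of this theorem, stating only that it is the cosine law for a generalised hyperbolic triangle from \cite{Cho, LuoGuo} ``adopted to the notation of the present paper,'' and your case-by-case reduction (on $\varepsilon_l$, with the $\sigma$, $\sigma^{\prime}$ and $\mu^{\prime}$ quantities specialising to the appropriate $\sinh$/$\cosh$/$\cos$ expressions) is exactly that adaptation made explicit. The only cosmetic imprecision is the remark about imaginary length shifts: for an ultra-ideal vertex \emph{both} edges of the face incident to it acquire the $\sqrt{-1}\,\pi/2$ shift (which is why the factors of $\sqrt{-1}$ cancel in the quotient), and the ideal case is a limit rather than a shift, but you verify that branch directly anyway.
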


\begin{theorem}[The Cosine Law for links]
Let $\mathrm{v}_i$ be the vertex of $T$ opposite to the face $F_{jkl}$, $\{i,j,k,l\} = \{1,2,3,4\}$. Then $L(\mathrm{v}_i)$ is either a spherical, a Euclidean or a generalised hyperbolic triangle and the following equality holds:
\begin{equation*}
{\mu^{\prime}}^{j}_{kl} = \frac{\cos a_{ij} + \cos a_{ik} \cos a_{il}}{\sin a_{ik} \sin a_{il}}.
\end{equation*}
\end{theorem}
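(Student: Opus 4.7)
The plan is to recognise $L(\mathrm{v}_i)$ as a bona fide generalised triangle in the geometry of constant curvature $\varepsilon_i$ and to apply the dual (second) cosine law there in a unified fashion, just as the preceding theorems were obtained as paraphrases of the classical laws.

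First I would set up the dictionary between the link and the tetrahedron. The three vertices of $L(\mathrm{v}_i)$ correspond to the edges $e_{ij}, e_{ik}, e_{il}$ emanating from $\mathrm{v}_i$; the angles of the link at these vertices are the dihedral angles $a_{ij}, a_{ik}, a_{il}$ of $T$; and the side opposite the vertex attached to $e_{ij}$ lies along the face $F_{ikl}$ and has length equal to the plane angle of $F_{ikl}$ at $\mathrm{v}_i$, which by our indexing is $b^{j}_{kl}$ (with analogous identifications for the other two sides). The justification splits on the sign of $\varepsilon_i$. For $\varepsilon_i=+1$ the vertex is proper and $L(\mathrm{v}_i)$ is the classical spherical link, whose side lengths coincide with the face angles at $\mathrm{v}_i$. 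For $\varepsilon_i=0$ the vertex is ideal and a horospherical cross-section at $\mathrm{v}_i$ realises the link as a Euclidean triangle with the same data. For $\varepsilon_i=-1$ the polar hyperplane of the ultra-ideal vertex $\mathrm{v}_i$ is orthogonal to every face incident to $\mathrm{v}_i$; its intersection with $T$ is a hyperbolic triangle whose sides are the common perpendiculars of pairs of adjacent faces at $\mathrm{v}_i$, i.e.\ exactly the lengths $b^{\ast}_{\ast\ast}$ from Section~\ref{preliminaries}.

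With this dictionary in hand, the result follows by invoking the dual cosine law in the appropriate ambient geometry:
\begin{equation*}
\cos(\sqrt{\varepsilon_i}\, b^{j}_{kl}) \;=\; \frac{\cos a_{ij} + \cos a_{ik}\,\cos a_{il}}{\sin a_{ik}\,\sin a_{il}},
\end{equation*}
which is the spherical second cosine law for $\varepsilon_i=+1$ and its hyperbolic analogue for $\varepsilon_i=-1$. For $\varepsilon_i=0$ the left-hand side is $1$, and the right-hand side also equals $1$ after substituting the Euclidean angle-sum identity $a_{ij}=\pi-a_{ik}-a_{il}$ and expanding $\cos(a_{ik}+a_{il})$. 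The left-hand side is ${\mu'}^{j}_{kl}$ by the definition of $\mu^{j}_{kl}$, so the claim follows at once.

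The only genuine obstacle is the ultra-ideal case: one has to check that the three common perpendiculars figuring in the definition of $b^{\ast}_{\ast\ast}$ really do close up to a single hyperbolic triangle on the polar plane of $\mathrm{v}_i$, rather than merely forming a triple of segments. This is handled by the orthogonality observation: each face through $\mathrm{v}_i$ meets the polar plane in a geodesic, the three resulting geodesics intersect pairwise at the feet of the common perpendiculars, and the interior angles of the resulting triangle coincide with the dihedrals of $T$ along $e_{ij}, e_{ik}, e_{il}$. Once this is verified the classical hyperbolic cosine law applies verbatim.
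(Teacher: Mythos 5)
Your proof is correct and takes essentially the same route as the paper, which states this theorem without proof, remarking only that it is the classical (dual) cosine law for the spherical, Euclidean or hyperbolic link triangle in the notation of the paper (citing Cho and Guo--Luo); your write-up supplies the details of exactly that identification. The dictionary you set up is accurate --- $b^{j}_{kl}$ is indeed the side of $L(\mathrm{v}_i)$ opposite the angle $a_{ij}$, with ${\mu^{\prime}}^{j}_{kl}=\cos(\sqrt{\varepsilon_i}\,b^{j}_{kl})$ interpolating between $\cos$, $1$ and $\cosh$ --- and the three-case application of the second cosine law, including the angle-sum check for $\varepsilon_i=0$ and the polar-plane argument for $\varepsilon_i=-1$, is sound.
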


\section{Auxiliary lemmata}\label{lemmata}

In the present section we shall consider various partial derivatives of certain geometric quantities associated with either the faces or the vertex links of a generalised hyperbolic tetrahedron $T$. These derivatives will be used later on in the computation of the entries of $\mathrm{Jac}^{\star}(T)$.

\begin{lemma}\label{lemma_l_b}
For $\{i,j,k,l\} = \{1,2,3,4\}$ we have
\begin{equation*}
\frac{\partial \ell_{kl}}{\partial b^{i}_{kl}} = - \varepsilon_j\,\frac{\mu^{i}_{kl}}{M^{i}},
\end{equation*}
\begin{equation*}
\frac{\partial \ell_{kl}}{\partial b^{i}_{jk}} = - \sigma^{\prime}_{jl}\,\frac{\mu^{i}_{kl}}{M^{i}},\hspace*{0.10in} \frac{\partial \ell_{kl}}{\partial b^{i}_{jl}} = - \sigma^{\prime}_{jk}\,\frac{\mu^{i}_{kl}}{M^{i}}.
\end{equation*}
\end{lemma}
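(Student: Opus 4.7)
The plan is to differentiate the first Cosine Law for the face $F_{jkl}$, written in the form
$$\sigma'_{kl}\, \mu^{i}_{jk}\, \mu^{i}_{jl} \;=\; {\mu^{\prime}}^{i}_{kl} + {\mu^{\prime}}^{i}_{jk}\, {\mu^{\prime}}^{i}_{jl},$$
implicitly with respect to each of the three generalised angles $b^{i}_{kl}$, $b^{i}_{jk}$ and $b^{i}_{jl}$ of that face, treating $\ell_{kl}$ as the dependent variable. The only auxiliary ingredients I need are the obvious identities
$$\frac{d\sigma_{kl}}{d\ell_{kl}}=\sigma'_{kl},\qquad \frac{d\sigma'_{kl}}{d\ell_{kl}}=\sigma_{kl},\qquad (\sigma'_{kl})^{2}-\sigma_{kl}^{2}=\varepsilon_{k}\varepsilon_{l},$$
together with $d\mu^{i}_{kl}/db^{i}_{kl}={\mu^{\prime}}^{i}_{kl}$ and $d{\mu^{\prime}}^{i}_{kl}/db^{i}_{kl}=-\varepsilon_{j}\mu^{i}_{kl}$, all of which follow immediately from the defining integrals in Section~\ref{preliminaries} (the last being a direct consequence of $\cos(\sqrt{\varepsilon_j}s)'=-\varepsilon_j\sqrt{\varepsilon_j}^{-1}\sin(\sqrt{\varepsilon_j}s)$ applied at $s=b^{i}_{kl}$).

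For the first formula, differentiating in $b^{i}_{kl}$ affects only $\sigma'_{kl}$ on the left and ${\mu^{\prime}}^{i}_{kl}$ on the right, giving
$$\sigma_{kl}\,\mu^{i}_{jk}\,\mu^{i}_{jl}\,\frac{\partial \ell_{kl}}{\partial b^{i}_{kl}}=-\varepsilon_{j}\,\mu^{i}_{kl},$$
which yields the claim after dividing by $M^{i}=\mu^{i}_{jk}\mu^{i}_{jl}\sigma_{kl}$. For the second formula, differentiation in $b^{i}_{jk}$ now touches $\sigma'_{kl}$, $\mu^{i}_{jk}$ and ${\mu^{\prime}}^{i}_{jk}$; after collecting the resulting terms, the desired equality reduces, via the Sine Law for faces (Theorem~\ref{thm_sine_face}) applied to replace every $\mu^{i}_{\cdot\cdot}$ by the proportional $\sigma_{\cdot\cdot}$ (the common ratio cancelling), to the auxiliary identity
$$\varepsilon_{l}\,\sigma_{jk}\,{\mu^{\prime}}^{i}_{jl} \;+\; \sigma'_{kl}\,{\mu^{\prime}}^{i}_{jk}\,\sigma_{jl} \;=\; \sigma'_{jl}\,\sigma_{kl}.$$

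I would then verify this identity by invoking the second Cosine Law for faces twice: once for ${\mu^{\prime}}^{i}_{jk}$ (carrying $\varepsilon_{l}$) and once for ${\mu^{\prime}}^{i}_{jl}$ (carrying $\varepsilon_{k}$, by the appropriate permutation of $\{j,k,l\}$). Upon substitution, two opposite terms $\pm\varepsilon_{l}\,\sigma'_{jk}\,\sigma'_{kl}$ cancel, and what remains collapses to $\sigma'_{jl}\bigl((\sigma'_{kl})^{2}-\varepsilon_{k}\varepsilon_{l}\bigr)=\sigma'_{jl}\,\sigma_{kl}^{2}$, which is precisely the Pythagorean-like identity listed above. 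The third formula then follows from the second by the symmetry $k\leftrightarrow l$, which exchanges ${\mu^{\prime}}^{i}_{jk}\leftrightarrow{\mu^{\prime}}^{i}_{jl}$, $\sigma'_{jk}\leftrightarrow\sigma'_{jl}$ and $\varepsilon_{k}\leftrightarrow\varepsilon_{l}$, while leaving $\ell_{kl}$, $\mu^{i}_{kl}$ and $M^{i}$ invariant.

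The main obstacle in this plan is the bookkeeping in the middle step: one must keep the $\varepsilon$-indices straight across the two instances of the second Cosine Law (each invoked in a different orientation on $F_{jkl}$), and then have the presence of mind to recognise the final simplification as the Pythagorean identity rather than attempt a brute expansion in terms of $e^{\pm\ell}$ and $\cos/\cosh$.
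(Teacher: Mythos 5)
Your proof is correct, and it begins exactly where the paper's does: implicit differentiation of the first Cosine Law for the face $F_{jkl}$ with respect to each of $b^{i}_{kl}$, $b^{i}_{jk}$, $b^{i}_{jl}$, treating $\ell_{kl}$ as the dependent variable; your first identity and the $k\leftrightarrow l$ symmetry for the third coincide with the paper's treatment. Where you diverge is in closing the second identity. The paper keeps the Cosine Law in quotient form, applies the quotient rule, and then --- using only $({\mu^{\prime}}^{i}_{jk})^{2} + \varepsilon_l (\mu^{i}_{jk})^{2} = 1$ --- recognises the surviving numerator ${\mu^{\prime}}^{i}_{jl} + {\mu^{\prime}}^{i}_{jk}{\mu^{\prime}}^{i}_{kl}$ as another instance of the first Cosine Law (for the edge $e_{jl}$), equal to $\sigma^{\prime}_{jl}\,\mu^{i}_{jk}\mu^{i}_{kl}$; no other trigonometric law enters. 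You instead clear denominators, use the Sine Law for faces to trade the $\mu^{i}_{\cdot\cdot}$ for the proportional $\sigma_{\cdot\cdot}$, and verify the resulting identity $\varepsilon_l\,\sigma_{jk}{\mu^{\prime}}^{i}_{jl} + \sigma^{\prime}_{kl}{\mu^{\prime}}^{i}_{jk}\sigma_{jl} = \sigma^{\prime}_{jl}\sigma_{kl}$ by two applications of the second Cosine Law together with $(\sigma^{\prime}_{kl})^{2} - \sigma^{2}_{kl} = \varepsilon_k\varepsilon_l$; I checked the substitution, the terms $\pm\varepsilon_l\sigma^{\prime}_{jk}\sigma^{\prime}_{kl}$ do cancel, and the expression collapses to $\sigma^{\prime}_{jl}\sigma_{kl}$ as claimed. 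Both routes are sound: the paper's is more economical because it never leaves the first Cosine Law, while yours is more mechanical and easier to verify step by step, at the cost of invoking two further laws. The one point worth adding to your write-up, which the paper states at the outset, is that $\sigma_{kl}\neq 0$ for a non-degenerate mildly truncated tetrahedron, so that the division by $M^{i}$ at the end is legitimate.
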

\begin{proof}
According to the definition of $\sigma_{kl}$, we have $\sigma_{kl} = 0$ only in the following two cases: $\varepsilon_k = \varepsilon_l = +1$ and $\ell_{kl} = 0$, or $\varepsilon_k = \varepsilon_l = -1$ and $\ell_{kl}=0$. In the former case, we have a degenerate tetrahedron with two proper vertices collapsing to one point. In the latter case the tetrahedron has two ultra-ideal vertices, whose polar planes are tangent at a point on the ideal boundary $\partial \mathbb{H}^3$. This is a limiting case, since in a generalised (mildly truncated) tetrahedron two polar planes never intersect or become tangent. Thus, we suppose that $\sigma_{kl} \neq 0$.

By taking derivatives on both sides of the first Cosine Law for faces, we get the following formulae: 
\begin{equation*}
\sigma_{kl}\,\frac{\partial \ell_{kl}}{\partial b^{i}_{kl}} = \frac{\partial \sigma^{\prime}_{kl}}{\partial b^{i}_{kl}} = \frac{1}{\mu^{i}_{jk} \mu^{i}_{jl}} \frac{\partial {\mu^{\prime}}^{i}_{kl}}{\partial b^{i}_{kl}}= -\varepsilon_j\, \frac{\mu^{i}_{kl}}{\mu^{i}_{jk} \mu^{i}_{jl}},
\end{equation*}
since 
\begin{equation*}
\frac{\partial \sigma^{\prime}_{kl}}{\partial b^{i}_{kl}} = \sigma_{kl}\,\frac{\partial \ell_{kl}}{\partial b^{i}_{kl}}\hspace*{0.05in} \mbox{ and } \hspace*{0.05in}\frac{\partial {\mu^{\prime}}^{i}_{kl}}{\partial b^{i}_{kl}}= -\varepsilon_j\, \mu^{i}_{kl}
\end{equation*}
by a direct computation. This implies the first identity of the lemma.

Now we compute 
\begin{equation*}
\sigma_{kl}\,\frac{\partial \ell_{kl}}{\partial b^{i}_{jk}} = \frac{\partial \sigma^{\prime}_{kl}}{\partial b^{i}_{jk}} = - \frac{(({\mu^{\prime}}^{i}_{jk})^{2} + \varepsilon_l (\mu^{i}_{jk})^{2})\mu^{i}_{jl}{\mu^{\prime}}^{i}_{jl} + \mu^{i}_{jl}{\mu^{\prime}}^{i}_{jk}{\mu^{\prime}}^{i}_{kl}}{(\mu^{i}_{jk} \mu^{i}_{jl})^{2}} = 
\end{equation*}
\begin{equation*}
- \frac{{\mu^{\prime}}^{i}_{jl} + {\mu^{\prime}}^{i}_{jk}{\mu^{\prime}}^{i}_{kl}}{\mu^{i}_{jk} \mu^{i}_{kl}}\cdot \frac{\mu^{i}_{kl}}{\mu^{i}_{jk} \mu^{i}_{jl}} = - \sigma^{\prime}_{jl}\,\frac{\mu^{i}_{kl}}{\mu^{i}_{jk} \mu^{i}_{jl}}.
\end{equation*}
where we use the identity $({\mu^{\prime}}^{i}_{jk})^{2} + \varepsilon_l (\mu^{i}_{jk})^{2} = 1$ and, as before, the fact that $\frac{\partial {\mu^{\prime}}^{i}_{jk}}{\partial b^{i}_{jk}} = -\varepsilon_l \mu^{i}_{jk}$. Then the second identity follows. The third one is analogous to the second one under the permutation of the indices $k$ and $l$.
\end{proof}

\begin{lemma}\label{lemma_b_a}
For $\{i,j,k,l\} = \{1,2,3,4\}$ we have
\begin{equation*}
\frac{\partial b^{j}_{kl}}{\partial a_{ij}} = \varepsilon_i\, \frac{\sin a_{ij}}{M_{jkl}}
\end{equation*}
\begin{equation*}
\frac{\partial b^{j}_{kl}}{\partial a_{ik}} = \varepsilon_i\, \frac{\sin a_{ij}}{M_{jkl}}\,{\mu^{\prime}}^{l}_{jk},\hspace*{0.1in} \frac{\partial b^{j}_{kl}}{\partial a_{il}} = \varepsilon_i\, \frac{\sin a_{ij}}{M_{jkl}}\,{\mu^{\prime}}^{k}_{jl}.
\end{equation*}
\end{lemma}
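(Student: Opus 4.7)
The plan is to imitate the strategy of Lemma~\ref{lemma_l_b}, but this time to differentiate the Cosine Law for links at the vertex $\mathrm{v}_i$, namely
$${\mu^{\prime}}^{j}_{kl} = \frac{\cos a_{ij} + \cos a_{ik}\cos a_{il}}{\sin a_{ik}\sin a_{il}},$$
once with respect to each of the three dihedral angles $a_{ij}$, $a_{ik}$, $a_{il}$ at $\mathrm{v}_i$ in turn.

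The left-hand side behaves uniformly: since ${\mu^{\prime}}^{j}_{kl} = \cos(\sqrt{\varepsilon_i}\, b^{j}_{kl})$, a direct computation gives
$$\frac{\partial {\mu^{\prime}}^{j}_{kl}}{\partial b^{j}_{kl}} = -\varepsilon_i\,\mu^{j}_{kl},$$
so any of the three differentiations produces the factor $-\varepsilon_i\,\mu^{j}_{kl}$ multiplying the partial derivative of $b^{j}_{kl}$ that we want to compute. For the derivative with respect to $a_{ij}$ the right-hand side simplifies immediately to $-\sin a_{ij}/(\sin a_{ik}\sin a_{il})$. Equating the two sides, using $\varepsilon_i^{2} = 1$ for $\varepsilon_i = \pm 1$, and recognising the definition $M_{jkl} = \mu^{j}_{kl}\sin a_{ik}\sin a_{il}$, gives the first identity at once.

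For the mixed derivative with respect to $a_{ik}$, the quotient rule together with the Pythagorean identity collapses the right-hand side to
$$-\frac{\cos a_{il} + \cos a_{ij}\cos a_{ik}}{\sin^{2} a_{ik}\,\sin a_{il}}.$$
The numerator here is exactly $\sin a_{ij}\sin a_{ik}\cdot {\mu^{\prime}}^{l}_{jk}$ by a second instance of the very same Cosine Law for links, applied with a cyclic permutation of the indices $j, k, l$. Substituting, cancelling a $\sin a_{ik}$ and equating with the left-hand side as before yields the second identity; the third follows by exchanging $k$ and $l$.

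The main obstacle is strictly notational: one has to recognise the hidden application of the Cosine Law that reduces the derivative of the quotient back to a clean expression in $M_{jkl}$ and in the auxiliary quantities ${\mu^{\prime}}^{l}_{jk}$, ${\mu^{\prime}}^{k}_{jl}$. The case $\varepsilon_i = 0$ of an ideal vertex $\mathrm{v}_i$ should be interpreted as a limiting one, since the independence of the three angles $a_{ij}$, $a_{ik}$, $a_{il}$ breaks down there (the Euclidean link constrains them to sum to $\pi$), paralleling the degenerate configurations that appear in the proof of Lemma~\ref{lemma_l_b}.
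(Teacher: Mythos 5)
Your proposal is correct and follows essentially the same route as the paper: differentiate the Cosine Law for links at $\mathrm{v}_i$ with respect to each of $a_{ij}$, $a_{ik}$, $a_{il}$, use $\partial {\mu^{\prime}}^{j}_{kl}/\partial b^{j}_{kl} = -\varepsilon_i\,\mu^{j}_{kl}$ on the left, and recognise a second instance of the same Cosine Law (with permuted indices) in the simplified right-hand side, finishing with $\varepsilon_i^2 = 1$ and the definition of $M_{jkl}$. Your closing remark on the degenerate case $\varepsilon_i = 0$ is a sensible addition that the paper defers to the proof of its Theorem~\ref{thm_jacobian1}.
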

\begin{proof}
By taking derivatives on both sides of the Cosine Law for links, we get the following formulae:
\begin{equation*}
-\varepsilon_i\, \mu^{j}_{kl}\, \frac{\partial b^{j}_{kl}}{\partial a_{ij}} = \frac{\partial {\mu^{\prime}}^{j}_{kl}}{\partial a_{ij}} = - \frac{\sin a_{ij}}{\sin a_{ik} \sin a_{il}}.
\end{equation*}
The first identity of the lemma follows.

Then we subsequently compute
\begin{equation*}
-\varepsilon_i\, \mu^{j}_{kl}\, \frac{\partial b^{j}_{kl}}{\partial a_{ik}} = \frac{\partial {\mu^{\prime}}^{j}_{kl}}{\partial a_{ik}} = - \frac{\cos a_{il} + \cos a_{ij} \cos a_{ik}}{\sin a_{ij} \sin a_{ik}}\, \frac{\sin a_{ij}}{\sin a_{ik} \sin a_{il}} = 
\end{equation*}
\begin{equation*}
{\mu^{\prime}}^{l}_{jk}\, \frac{\sin a_{ij}}{\sin a_{ik} \sin a_{il}}.
\end{equation*}
The second identity follows. The third one is analogous under the permutation of the indices $k$ and $l$.
\end{proof}

Now we shall prove several identities that relate the principal minors $G_{ii}$, $i\in\{1,2,3,4\}$ of the Gram matrix $G := G(T)$ of the tetrahedron $T$ with its face or vertex momenta.  

\begin{lemma}\label{lemma_detGii}
For $\{i,j,k,l\} = \{1,2,3,4\}$, we have that
\begin{equation*}
\det G_{ii} = \varepsilon_i\,M^2_{jkl}.
\end{equation*}
\end{lemma}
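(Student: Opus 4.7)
The plan is to compute $\det G_{ii}$ by direct cofactor expansion and then recognise the result via the Cosine Law for links.

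First, I write $G_{ii}$ out explicitly. In the face-normal convention, the full Gram matrix $G$ satisfies $G_{pq}=-\cos a_{rs}$ whenever $\{p,q,r,s\}=\{1,2,3,4\}$, so deleting row and column $i$ yields a symmetric $3\times 3$ matrix with unit diagonal whose three off-diagonal entries are $-\cos a_{ij}$, $-\cos a_{ik}$, $-\cos a_{il}$, that is, minus the cosines of the three dihedral angles at the edges incident to $\mathrm{v}_i$. These are also the angles of the generalised link $L(\mathrm{v}_i)$, which makes the appearance of $\varepsilon_i$ on the right-hand side unsurprising. A cofactor expansion then gives
\begin{equation*}
\det G_{ii} = 1 - \cos^2 a_{ij} - \cos^2 a_{ik} - \cos^2 a_{il} - 2\cos a_{ij}\cos a_{ik}\cos a_{il}.
\end{equation*}

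Next, I use the Cosine Law for links to solve for one cosine, namely $\cos a_{ij} = \sin a_{ik}\sin a_{il}\,{\mu^{\prime}}^{j}_{kl} - \cos a_{ik}\cos a_{il}$, and substitute above. The two terms linear in ${\mu^{\prime}}^{j}_{kl}$ (one arising from $-\cos^2 a_{ij}$ and one from $-2\cos a_{ij}\cos a_{ik}\cos a_{il}$) have opposite signs and cancel, while the ${\mu^{\prime}}$-free remainder collapses through $1-\cos^2=\sin^2$ to $(1-\cos^2 a_{ik})(1-\cos^2 a_{il})$, leaving
\begin{equation*}
\det G_{ii} = \sin^2 a_{ik}\,\sin^2 a_{il}\,\bigl(1 - ({\mu^{\prime}}^{j}_{kl})^2\bigr).
\end{equation*}

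To finish, I invoke the Pythagorean-type identity $({\mu^{\prime}}^{j}_{kl})^2 + \varepsilon_i\,(\mu^{j}_{kl})^2 = 1$, which is the same relation used in the proof of Lemma~\ref{lemma_l_b}, holds uniformly across the three signs of $\varepsilon_i$, and follows directly from the defining integral of $\mu$. Substituting gives $1-({\mu^{\prime}}^{j}_{kl})^2 = \varepsilon_i\,(\mu^{j}_{kl})^2$, whence
\begin{equation*}
\det G_{ii} = \varepsilon_i\,(\mu^{j}_{kl}\sin a_{ik}\sin a_{il})^2 = \varepsilon_i\,M^2_{jkl},
\end{equation*}
as required. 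The only real obstacle is algebraic bookkeeping; I expect Theorem~\ref{thm_sine_link} to guarantee that $M_{jkl}$ is invariant under permutations of $\{j,k,l\}$, so the asymmetric elimination of $\cos a_{ij}$ does not destroy the symmetry of the final answer.
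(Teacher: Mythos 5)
Your proof is correct and follows essentially the same route as the paper's: both reduce $\det G_{ii}$ via the Cosine Law for links and the identity $({\mu^{\prime}}^{j}_{kl})^2 + \varepsilon_i(\mu^{j}_{kl})^2 = 1$ to arrive at $\varepsilon_i(\mu^{j}_{kl}\sin a_{ik}\sin a_{il})^2 = \varepsilon_i M^2_{jkl}$. The only difference is cosmetic: the paper substitutes the Cosine Law into the matrix entries via row operations before expanding, whereas you expand the $3\times 3$ determinant first and then eliminate $\cos a_{ij}$.
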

\begin{proof}
Let us perform the computation for $G_{11}$ and other cases will follow by analogy. We have that
\begin{equation*}
\medmath{\det \left(\begin{array}{ccc}
1& -\cos a_{14}& -\cos a_{13}\\
-\cos a_{14}& 1& -\cos a_{12}\\
-\cos a_{13}& -\cos a_{12}& 1
\end{array}\right) =}
\end{equation*}
\begin{equation*}
\medmath{= \det \left(\begin{array}{ccc}
1& -\cos a_{14}& -\cos a_{13}\\
0& \sin^{2} a_{14}& - {\mu^{\prime}}^{2}_{34}\, \sin a_{13} \sin a_{14}\\
0& - {\mu^{\prime}}^{2}_{34}\, \sin a_{13} \sin a_{14} & \sin^{2} a_{13}
\end{array}\right) =}
\end{equation*}
\begin{equation*}
= (1 - ({\mu^{\prime}}^{2}_{34})^{2}) \sin^{2} a_{13} \sin^{2} a_{14}
= \varepsilon_1\,(\mu^{2}_{34})^{2} \sin^{2} a_{13} \sin^{2} a_{14} = \varepsilon_1\,M^{2}_{234}.
\end{equation*}
By permuting the set $\{i,j,k,l\} = \{1,2,3,4\}$, one gets all other identities of the lemma.
\end{proof}

\begin{lemma}\label{lemma_detG}
For $\{i,j,k,l\} = \{1,2,3,4\}$, we have that
\begin{equation*}
- \det G = \sin^{2}a_{jk}\, \sin^{2}a_{jl}\, \sin^{2}a_{kl}\, (M^{i})^{2}.
\end{equation*}
\end{lemma}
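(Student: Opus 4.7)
By the symmetry of relabelling the vertices it suffices to establish the identity for $i=1$, $\{j,k,l\}=\{2,3,4\}$. The plan is to reduce the $4\times 4$ determinant $\det G$ to a $3\times 3$ Gram-type determinant of the opposite face $F_{234}$, and then evaluate this latter determinant using the Cosine Law for faces in much the same spirit as the proof of Lemma~\ref{lemma_detGii}. Since the $(1,1)$-entry of $G$ equals $1$, a single Schur reduction gives
\begin{equation*}
\det G \;=\; \det\!\bigl(\hat G - r\,r^{T}\bigr),
\end{equation*}
where $\hat G := G_{11}$ is the $3\times 3$ principal submatrix indexed by $\{2,3,4\}$, and $r = (-\cos a_{34},\,-\cos a_{24},\,-\cos a_{23})^{T}$ is the remainder of the first column of $G$.

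The first step is to identify the Schur complement $\hat G - r r^{T}$ with (a scaling of) the Gram matrix of $F_{234}$. Its diagonal entries collapse to $\sin^2 a_{34}$, $\sin^2 a_{24}$, $\sin^2 a_{23}$, while each off-diagonal entry is of the form $-\cos a_{mn}-\cos a_{pq}\cos a_{rs}$ and can be rewritten, via the Cosine Law for links applied at the appropriate vertex, as $-{\mu'}^{1}_{pq}\sin a_{\cdot}\sin a_{\cdot}$. Factoring the sines out of the rows and columns, I expect to obtain
\begin{equation*}
\hat G - r r^{T} \;=\; D\,\tilde G\,D, \qquad D = \mathrm{diag}(\sin a_{34},\,\sin a_{24},\,\sin a_{23}),
\end{equation*}
with
\begin{equation*}
\tilde G \;=\; \begin{pmatrix} 1 & -{\mu'}^{1}_{23} & -{\mu'}^{1}_{24} \\ -{\mu'}^{1}_{23} & 1 & -{\mu'}^{1}_{34} \\ -{\mu'}^{1}_{24} & -{\mu'}^{1}_{34} & 1 \end{pmatrix},
\end{equation*}
so that $\det G = \sin^{2}a_{23}\sin^{2}a_{24}\sin^{2}a_{34}\,\det \tilde G$.

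It remains to show $\det \tilde G = -(M^{1})^{2}$. Carrying out the same row reduction as in the proof of Lemma~\ref{lemma_detGii} and invoking the Pythagorean-type identity $1-({\mu'}^{1}_{jk})^{2} = \varepsilon_{l}\,(\mu^{1}_{jk})^{2}$ (used already in the proof of Lemma~\ref{lemma_l_b}), the computation reduces to
\begin{equation*}
\det \tilde G \;=\; \varepsilon_{3}\varepsilon_{4}\,(\mu^{1}_{23})^{2}(\mu^{1}_{24})^{2} \,-\, \bigl({\mu'}^{1}_{34}+{\mu'}^{1}_{23}{\mu'}^{1}_{24}\bigr)^{2}.
\end{equation*}
The first Cosine Law for faces applied to $F_{234}$ rewrites the parenthesised combination as $\sigma'_{34}\,\mu^{1}_{23}\mu^{1}_{24}$, and the elementary identity $(\sigma'_{kl})^{2}-\varepsilon_{k}\varepsilon_{l}=\sigma_{kl}^{2}$ (immediate from the definitions of $\sigma$ and $\sigma'$) then yields $\det \tilde G = -(\mu^{1}_{23}\mu^{1}_{24}\sigma_{34})^{2} = -(M^{1})^{2}$, which combined with the previous paragraph establishes the claim.

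The main obstacle is the careful indexing in the middle step: one has to check that each off-diagonal entry of $\hat G - rr^{T}$ is indeed the image of the correct ${\mu'}^{1}_{\cdot\cdot}$ under the Cosine Law for links; once this matching is pinned down, the remaining computation mirrors Lemma~\ref{lemma_detGii} and the algebra collapses cleanly thanks to the ``Pythagorean'' identities for $\mu,{\mu'}$ and for $\sigma,\sigma'$.
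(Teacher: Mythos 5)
Your proposal is correct and follows essentially the same route as the paper: the Schur complement with respect to the $(1,1)$-entry is exactly the paper's explicit row reduction, the identification of the reduced entries via the Cosine Law for links, the factoring out of the sines, and the final evaluation via the first Cosine Law for faces together with the identities $1-({\mu'}^{1}_{jk})^{2}=\varepsilon_l(\mu^{1}_{jk})^{2}$ and $(\sigma'_{kl})^{2}-\varepsilon_k\varepsilon_l=\sigma^{2}_{kl}$ all match the paper's computation. No gaps.
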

\begin{proof}
Let us subsequently compute 
\begin{equation*}
\det G = \det \left( \begin{array}{cccc}
1& -\cos a_{34}& -\cos a_{24}& -\cos a_{23}\\
-\cos a_{34}& 1& -\cos a_{14}& -\cos a_{13}\\
-\cos a_{24}& -\cos a_{14}& 1& -\cos a_{12}\\
-\cos a_{23}& -\cos a_{13}& -\cos a_{12}& 1
\end{array} \right) = 
\end{equation*}
\begin{equation*}
\medmath{\det \left( \begin{array}{cccc}
1& -\cos a_{34}& -\cos a_{24}& -\cos a_{23}\\
0& \sin^{2} a_{34}& -{\mu^{\prime}}^{1}_{23} \sin a_{24} \sin a_{34}& -{\mu^{\prime}}^{1}_{24} \sin a_{23} \sin a_{34}\\
0& -{\mu^{\prime}}^{1}_{23} \sin a_{24} \sin a_{34}& \sin^{2} a_{24}& -{\mu^{\prime}}^{1}_{34} \sin a_{23} \sin a_{24}\\
0& -{\mu^{\prime}}^{1}_{24} \sin a_{23} \sin a_{34}& -{\mu^{\prime}}^{1}_{34} \sin a_{23} \sin a_{24}& \sin^{2} a_{23}
\end{array} \right) =} 
\end{equation*}
\begin{equation*}
\sin^{2} a_{23}\, \sin^{2} a_{24} \sin^{2} a_{34}\hspace*{0.1in} \det \left( \begin{array}{ccc}
1& -{\mu^{\prime}}^{1}_{23}& -{\mu^{\prime}}^{1}_{24}\\
-{\mu^{\prime}}^{1}_{23}& 1& -{\mu^{\prime}}^{1}_{34}\\
-{\mu^{\prime}}^{1}_{24}& -{\mu^{\prime}}^{1}_{34}& 1
\end{array} \right)=
\end{equation*}
\begin{equation*}
\sin^{2} a_{23}\, \sin^{2} a_{24} \sin^{2} a_{34}\hspace*{0.1in} \det \left( \begin{array}{ccc}
1& -{\mu^{\prime}}^{1}_{23}& -{\mu^{\prime}}^{1}_{24}\\
0& \varepsilon_4 (\mu^{1}_{23})^{2}& -\sigma^{\prime}_{34} \mu^{1}_{23} \mu^{1}_{24}\\
0& -\sigma^{\prime}_{34} \mu^{1}_{23} \mu^{1}_{24}& \varepsilon_3 (\mu^{1}_{24})^{2}
\end{array} \right)=
\end{equation*}
\begin{equation*}
\sin^{2} a_{23}\, \sin^{2} a_{24} \sin^{2} a_{34}\hspace*{0.1in} (\varepsilon_3 \varepsilon_4 - (\sigma^{\prime}_{34})^{2}) (\mu^{1}_{23} \mu^{1}_{24})^{2} = 
\end{equation*}
\begin{equation*}
- \sin^{2} a_{23}\, \sin^{2} a_{24} \sin^{2} a_{34}\hspace*{0.1in} (M^{1})^{2}.
\end{equation*}
Here we used the Cosine Law for links in the second equality and the first Cosine Law for faces in the fourth equality. Also, we used the fact that for $\{i,j,k,l\} = \{1,2,3,4\}$ one has $1 - \varepsilon_l (\mu^{i}_{jk})^{2} = ({\mu^{\prime}}^{i}_{jk})^{2}$ (in the third equality) and $\sigma^{2}_{ij} - (\sigma^{\prime}_{ij})^{2} = \varepsilon_i \varepsilon_j$ (in the sixth equality). All other identities of the lemma follow by permuting the set $\{i,j,k,l\}=\{1,2,3,4\}$.
\end{proof}

\section{Dual Jacobian of a generalised hyperbolic tetrahedron}\label{jacobian1}

In this section we shall compute the entries of the dual Jacobian matrix $\mathrm{Jac}^{\star}(T)$ of a generalised hyperbolic tetrahedron $T$.

\begin{theorem}\label{thm_jacobian1}
Let $T$ be a generalised hyperbolic tetrahedron. Then
\begin{equation*}
\mathrm{Jac}^{\star}(T) := \frac{\partial(\ell_{12}, \ell_{13}, \ell_{14}, \ell_{23}, \ell_{24}, \ell_{34})}{\partial(a_{12}, a_{13}, a_{14}, a_{23}, a_{24}, a_{34})} = -\eta\, \mathscr{D} \mathscr{S} \mathscr{D},
\end{equation*}
where
\begin{equation*}
\eta := \left( \frac{\Pi^4_{i=1} \varepsilon_i\, \det G_{ii}}{(-\det G)^3} \right)^{1/2},\,
\mathscr{D} := \left( \begin{array}{cccccc}
\sigma_{12}& & & & & \\
& \sigma_{13}& & & & \\
& & \sigma_{14}& & & \\
& & & \sigma_{23}& & \\
& & & & \sigma_{24}& \\
& & & & & \sigma_{34}
\end{array} \right)
\end{equation*}
and
\begin{equation*}
\mathscr{S} := \left( \begin{array}{cccccc}
\omega_{12}& \varepsilon_1 \sigma^{\prime}_{14}& \varepsilon_1 \sigma^{\prime}_{13}& \varepsilon_2 \sigma^{\prime}_{24}& \varepsilon_2 \sigma^{\prime}_{23}& 1\\
\varepsilon_1 \sigma^{\prime}_{14}& \omega_{13}& \varepsilon_1 \sigma^{\prime}_{12}& \varepsilon_3 \sigma^{\prime}_{34}& 1& \varepsilon_3 \sigma^{\prime}_{23}\\
\varepsilon_1 \sigma^{\prime}_{13}& \varepsilon_1 \sigma^{\prime}_{12}& \omega_{14}& 1& \varepsilon_4 \sigma^{\prime}_{34}& \varepsilon_4 \sigma^{\prime}_{24}\\
\varepsilon_2 \sigma^{\prime}_{24}& \varepsilon_3 \sigma^{\prime}_{34}& 1& \omega_{23}& \varepsilon_2 \sigma^{\prime}_{12}& \varepsilon_3 \sigma^{\prime}_{13}\\
\varepsilon_2 \sigma^{\prime}_{23}& 1& \varepsilon_4 \sigma^{\prime}_{34}& \varepsilon_2 \sigma^{\prime}_{12}& \omega_{24}& \varepsilon_4 \sigma^{\prime}_{14}\\
1& \varepsilon_3 \sigma^{\prime}_{23}& \varepsilon_4 \sigma^{\prime}_{24}& \varepsilon_3 \sigma^{\prime}_{13}& \varepsilon_4 \sigma^{\prime}_{14}& \omega_{34}
\end{array} \right),
\end{equation*}
where
\begin{equation*}
\omega_{kl} := \frac{\sigma^{\prime}_{ik}\sigma^{\prime}_{jl} + \varepsilon_l \sigma^{\prime}_{il}\sigma^{\prime}_{jl}\sigma^{\prime}_{kl} + \sigma^{\prime}_{il}\sigma^{\prime}_{jk} + \varepsilon_k \sigma^{\prime}_{ik}\sigma^{\prime}_{jk}\sigma^{\prime}_{kl}}{\sigma^2_{kl}}.
\end{equation*}
\end{theorem}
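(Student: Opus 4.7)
The plan is to use the intermediate parametrisation $a_{\cdot\cdot}\to b^{\cdot}_{\cdot\cdot}\to\ell_{\cdot\cdot}$: each face-angle $b$ is determined from the three dihedral angles at its vertex via the Cosine Law for links, and each edge length $\ell_{kl}$ is determined, via the Cosine Law for faces, from the three face-angles of any face $F_{jkl}$ containing $e_{kl}$. Composing Lemma~\ref{lemma_l_b} with Lemma~\ref{lemma_b_a} through the chain rule thus produces every entry $\partial\ell_{kl}/\partial a_{mn}$ of $\mathrm{Jac}^{\star}(T)$; the computation then splits according to the mutual position of $e_{mn}$ and $e_{kl}$.

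\textit{Opposite edges.} If $\{m,n\}\cap\{k,l\}=\emptyset$, then $a_{mn}=a_{ij}$ is a dihedral at neither endpoint of $e_{kl}$; among the three face-angles on $F_{jkl}$ only $b^i_{kl}$ (at $v_j$) depends on $a_{ij}$, and $a_{ij}$ is its \emph{opposite} angle in the link at $v_j$. Lemma~\ref{lemma_b_a}'s first identity (no $\mu'$-factor) applies and the chain rule collapses to a single product, simplifying via $M_{ikl}=\mu^i_{kl}\sin a_{jk}\sin a_{jl}$ and Lemma~\ref{lemma_detG} to
\begin{equation*}
\frac{\partial\ell_{kl}}{\partial a_{ij}}\ =\ -\frac{\sin a_{ij}\sin a_{kl}}{\sqrt{-\det G}}.
\end{equation*}
We read this off as $-\eta\,\sigma_{ij}\sigma_{kl}\cdot 1$, fixing the scalar $\eta=\sin a_{ij}\sin a_{kl}/(\sigma_{ij}\sigma_{kl}\sqrt{-\det G})$; its independence of the chosen opposite pair is itself a dual sine rule obtained from Lemma~\ref{lemma_detG} applied at each of the four vertices in turn, and matches the $1$-entries of $\mathscr{S}$.

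\textit{Adjacent edges} (sharing exactly one vertex $v_s$, the other endpoint of $e_{mn}$ being some $v_n\notin\{v_k,v_l\}$). Working in the face containing $e_{kl}$ whose vertex set excludes $v_n$ kills two of the three chain-rule terms; the surviving face-angle sits at $v_s$ and $a_{mn}$ is again its opposite angle in the link at $v_s$. The same Lemmas~\ref{lemma_l_b}, \ref{lemma_b_a} together with the Sine Law for faces and Lemma~\ref{lemma_detG} give $-\eta\,\sigma_{mn}\sigma_{kl}\,\varepsilon_s\sigma'_{st}$, where $v_t$ is the remaining fourth vertex, matching the off-diagonal entries $\varepsilon_s\sigma'_{st}$ of $\mathscr{S}$.

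\textit{Same edge.} Now two of the face-angles on $F_{jkl}$ depend on $a_{kl}$ (those at $v_k$ and at $v_l$), and in both links $a_{kl}$ is an \emph{adjacent} angle to the relevant face-angle, so the $\mu'$-versions of Lemma~\ref{lemma_b_a} enter. The Second Cosine Law for faces rewrites each of the resulting $\mu'$ in terms of $\sigma$'s and $\sigma'$'s, while the Sine Law for faces on both $F_{jkl}$ and $F_{ikl}$—together with the Sine Law for links at $v_k$ and at $v_l$—forces the two contributions to share a common denominator. Their numerators then sum to exactly $\sigma^2_{kl}\omega_{kl}$, and the denominator collapses into $\eta\,\sigma^2_{kl}$ after one more invocation of Lemma~\ref{lemma_detG}. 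I expect this diagonal case to be the main obstacle: merging the two chain-rule terms into the symmetric four-term expression $\omega_{kl}$ requires careful sign tracking through Lemma~\ref{lemma_l_b}, the Second Cosine Law, and the identity $(\mu'^i_{jk})^2+\varepsilon_l(\mu^i_{jk})^2=1$ already exploited in the proof of Lemma~\ref{lemma_l_b}. Finally, Lemmas~\ref{lemma_detGii} and \ref{lemma_detG} plugged into the opposite-edge expression for $\eta$ identify it with $\sqrt{\prod_i\varepsilon_i\det G_{ii}/(-\det G)^3}$, completing the proof.
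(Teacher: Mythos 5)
Your proposal follows essentially the same route as the paper: the chain rule through the intermediate face-angle variables $b^{\cdot}_{\cdot\cdot}$, a case split by whether the two edges are opposite, adjacent, or equal, and the same combination of Lemmata~\ref{lemma_l_b}--\ref{lemma_detG} with the sine and cosine laws (in particular the second Cosine Law for faces in the diagonal case) to produce the entries $-\eta\,\sigma_{ij}\sigma_{kl}$, $-\varepsilon_k\eta\,\sigma_{ik}\sigma_{kl}\sigma^{\prime}_{jk}$ and $-\eta\,\omega_{kl}\sigma^2_{kl}$. The only cosmetic difference is that you fix $\eta$ from the opposite-edge case and then check its well-definedness, whereas the paper rewrites each entry directly through the vertex and face momenta and Lemma~\ref{lemma_detGii} to obtain $\eta=\sqrt{\Pi^4_{i=1}\varepsilon_i\det G_{ii}/(-\det G)^3}$ uniformly.
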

\begin{proof}
We compute the respective derivatives, that constitute the entries of $\mathrm{Jac}^{\star}(T)$. Suppose that $\varepsilon_i \neq 0$, $i\in\{1,2,3,4\}$, since the cases when $\varepsilon_j = 0$ for some $j\in\{1,2,3,4\}$ can be dealt with in an analogous manner. Then for $\{i,j,k,l\}=\{1,2,3,4\}$, one has
\begin{equation*}
\frac{\partial \ell_{kl}}{\partial a_{ij}} = \frac{\partial \ell_{kl}}{\partial b^{j}_{kl}}\, \frac{\partial b^{j}_{kl}}{\partial a_{ij}} = - \varepsilon_i\,\frac{\mu^{j}_{kl}}{M^{j}}\,\cdot\,\varepsilon_i\,\frac{\sin a_{ij}}{M_{jkl}} \overbrace{=}^{(1)} -\frac{1}{M^{j}}\,\frac{\sin a_{ij}}{\sin a_{ik} \sin a_{il}} = 
\end{equation*}
\begin{equation*}
\medmath{-\frac{1}{M^{j}}\,\frac{\sin a_{ij}}{\sin a_{ik} \sin a_{il}} \frac{1}{\sigma_{ij}} \frac{1}{\sigma_{kl}}\,\sigma_{ij} \sigma_{kl} \overbrace{=}^{(2)} -\frac{1}{M^{j}}\,\frac{\sin a_{ij}}{\sin a_{ik} \sin a_{il}}\,\sigma_{ij} \sigma_{kl}\,\frac{\mu^{i}_{jk} \mu^{i}_{jl}}{M^{i}}\,\frac{\mu^{l}_{ik}\mu^{l}_{jk}}{M^{l}} \overbrace{=}^{(3)}} 
\end{equation*}
\begin{equation*}
-\frac{M_{ijk} M_{ikl} M_{ijl} M_{jkl}}{\sqrt{(-\det G)^{3}}}\,\sigma_{ij} \sigma_{kl} \overbrace{=}^{(4)} -\sqrt{\frac{\Pi^{4}_{i=1} \varepsilon_i \det G_{ii}}{(-\det G)^{3}}}\,\sigma_{ij}\sigma_{kl} = -\eta\, \sigma_{ij} \sigma_{kl}.
\end{equation*}
Here we used the definitions of vertex and face momenta, as well as Lemmata \ref{lemma_l_b} and \ref{lemma_detG}. Indeed, in (1) we have that $M_{ijk} = \mu^{j}_{kl}\, \sin a_{ik}\, \sin a_{il}$ and in (2) we use the fact that $\sigma_{ij} = \frac{\mu^{l}_{ik}\, \mu^{l}_{jk}}{M^l}$, $\sigma_{kl} = \frac{\mu^{i}_{jk}\, \mu^{i}_{jl}}{M^{i}}$. In (3) we use
\begin{equation*}
\mu^{i}_{jk} = \frac{M_{ijk}}{\sin a_{jl}\, \sin a_{kl}},\hspace*{0.1in} \mu^{l}_{ik} = \frac{M_{ikl}}{\sin a_{ij}\, \sin a_{jk}},
\end{equation*}
\begin{equation*}
\mu^{i}_{jl} = \frac{M_{ijl}}{\sin a_{jk}\, \sin a_{kl}},\hspace*{0.1in} \mu^{l}_{jk} = \frac{M_{jkl}}{\sin a_{ij}\, \sin a_{ik}},
\end{equation*}
together with the identities of Lemma~\ref{lemma_detG}. In (4) we use Lemma~\ref{lemma_detGii}. 

Analogous to the above, we compute for $\{i,j,k,l\}=\{1,2,3,4\}$,
\begin{equation*}
\frac{\partial \ell_{kl}}{\partial a_{ik}} = \frac{\partial \ell_{kl}}{\partial b^{i}_{jk}}\, \frac{\partial b^{i}_{jk}}{\partial a_{ik}} = -\varepsilon_k \frac{\sin a_{ik}}{M_{ijl}}\,\frac{\mu^{i}_{kl}}{M^{i}}\,\sigma^{\prime}_{jk} = 
\end{equation*}
\begin{equation*}
-\varepsilon_k \frac{\sqrt{\varepsilon_j \det G_{jj}}}{M^{i} \sin a_{jk} \sin a_{jl}}\, \frac{\sin a_{ik}}{\sqrt{\varepsilon_k \det G_{kk}}}\, \frac{1}{\sigma_{ik} \sigma_{kl}}\, \sigma_{ik} \sigma_{kl}\, \sigma^{\prime}_{jk}=
\end{equation*}
\begin{equation*}
-\varepsilon_k \frac{\sqrt{\varepsilon_j \det G_{jj}}}{M^{i} \sin a_{jk} \sin a_{jl}}\, \frac{\sin a_{ik}}{\sqrt{\varepsilon_k \det G_{kk}}}\,\frac{\mu^{i}_{jk} \mu^{i}_{jl}}{M^{i}}\, \frac{\mu^{j}_{il} \mu^{j}_{kl}}{M^{j}}\, \sigma_{ik}\sigma_{kl}\, \sigma^{\prime}_{jk} =
\end{equation*}
\begin{equation*}
-\varepsilon_k \frac{\sqrt{\varepsilon_j \det G_{jj}}}{\sqrt{\varepsilon_k \det G_{kk}}} \frac{\sqrt{\varepsilon_l \det G_{ll}} \sqrt{\varepsilon_k \det G_{kk}} \sqrt{\varepsilon_k \det G_{kk}} \sqrt{\varepsilon_i \det G_{ii}}}{\sqrt{(-\det G)^3}}\,\sigma_{ik}\sigma_{kl}\,\sigma^{\prime}_{jk} =
\end{equation*}
\begin{equation*}
-\varepsilon_k \sqrt{\frac{\Pi^{4}_{i=1} \varepsilon_i \det G_{ii}}{(-\det G)^{3}}}\,\sigma_{ik}\sigma_{kl}\,\sigma^{\prime}_{jk} = -\varepsilon_k\, \eta\, \sigma_{ik} \sigma_{kl}\, \sigma^{\prime}_{jk}.
\end{equation*}

Finally, for $\{i,j,k,l\}=\{1,2,3,4\}$, we compute the derivative
\begin{equation*}
\frac{\partial \ell_{kl}}{\partial a_{kl}} = \frac{\partial \ell_{kl}}{\partial b^{i}_{jk}}\, \frac{\partial b^{i}_{jk}}{\partial a_{kl}} + \frac{\partial \ell_{kl}}{\partial b^{i}_{jl}}\, \frac{\partial b^{i}_{jl}}{\partial a_{kl}}.
\end{equation*}
Since the two terms of the above sum are symmetric under the permutation of $k$ and $l$, we may compute only the first one. The second one will be analogous. By Lemmata \ref{lemma_l_b} and \ref{lemma_b_a}, we get
\begin{equation*}
\frac{\partial \ell_{kl}}{\partial b^{i}_{jk}}\, \frac{\partial b^{i}_{jk}}{\partial a_{kl}} = -\varepsilon_l \frac{\mu^{i}_{kl}}{M^{i}}\, \frac{\sin a_{il}}{M_{ijk}}\,\sigma^{\prime}_{jl}\, {\mu^{\prime}}^{j}_{ik} \overbrace{=}^{(5)} 
\end{equation*}
\begin{equation*}
-(\sigma^{\prime}_{ik}\sigma^{\prime}_{jl} + \varepsilon_l \sigma^{\prime}_{il}\sigma^{\prime}_{jl}\sigma^{\prime}_{kl})\,\frac{\mu^{i}_{kl}}{M^{i}}\, \frac{\sin a_{il}}{M_{ijk}}\,\frac{1}{\sigma_{il}\sigma_{kl}} \overbrace{=}^{(6)} 
\end{equation*}
\begin{equation*}
-(\sigma^{\prime}_{ik}\sigma^{\prime}_{jl} + \varepsilon_l \sigma^{\prime}_{il}\sigma^{\prime}_{jl}\sigma^{\prime}_{kl})\,\frac{\sqrt{\varepsilon_j \det G_{jj}}}{M^{i} \sin a_{jk} \sin a_{jl}}\,\frac{\sin a_{il}}{\sqrt{\varepsilon_l \det G_{ll}}}\, \frac{\mu^{i}_{jk} \mu^{i}_{jl}}{M^{i}} \frac{\mu^{j}_{ik} \mu^{j}_{kl}}{M^{j}} \overbrace{=}^{(7)}
\end{equation*}
\begin{equation*}
\medmath{-(\sigma^{\prime}_{ik}\sigma^{\prime}_{jl} + \varepsilon_l \sigma^{\prime}_{il}\sigma^{\prime}_{jl}\sigma^{\prime}_{kl})\,\sqrt{\frac{\varepsilon_j \det G_{jj}}{(-\det G)^3}}\, \frac{\sqrt{\varepsilon_l \det G_{ll}} \sqrt{\varepsilon_k \det G_{kk}} \sqrt{\varepsilon_l \det G_{ll}} \sqrt{\varepsilon_i \det G_{ii}}}{\sqrt{\varepsilon_l \det G_{ll}}} =}
\end{equation*}
\begin{equation*}
-(\sigma^{\prime}_{ik}\sigma^{\prime}_{jl} + \varepsilon_l \sigma^{\prime}_{il}\sigma^{\prime}_{jl}\sigma^{\prime}_{kl})\,\sqrt{\frac{\Pi^{4}_{i=1} \varepsilon_i \det G_{ii}}{(-\det G)^{3}}} = -\eta\, (\sigma^{\prime}_{ik}\sigma^{\prime}_{jl} + \varepsilon_l \sigma^{\prime}_{il}\sigma^{\prime}_{jl}\sigma^{\prime}_{kl}).
\end{equation*}
Here, in (5) we used the second Cosine Law for faces and in (6) we used the equality $M_{ikl} = \mu^{i}_{kl}\, \sin a_{jk}\, \sin a_{jl}$ together with Lemma~\ref{lemma_detGii}. In (7) we perform a computation analogous to (3). 

Thus, we obtain 
\begin{equation*}
\frac{\partial \ell_{kl}}{\partial a_{kl}} = \frac{\partial \ell_{kl}}{\partial b^{i}_{jk}}\, \frac{\partial b^{i}_{jk}}{\partial a_{kl}} + \frac{\partial \ell_{kl}}{\partial b^{i}_{jl}}\, \frac{\partial b^{i}_{jl}}{\partial a_{kl}} = 
\end{equation*}
\begin{equation*}
-\eta\, (\sigma^{\prime}_{ik}\sigma^{\prime}_{jl} + \varepsilon_l \sigma^{\prime}_{il}\sigma^{\prime}_{jl}\sigma^{\prime}_{kl})
-\eta\, (\sigma^{\prime}_{il}\sigma^{\prime}_{jk} + \varepsilon_k \sigma^{\prime}_{ik}\sigma^{\prime}_{jk}\sigma^{\prime}_{kl}) = -\eta\,\omega_{kl}\,\sigma^{2}_{kl}.
\end{equation*}
The proof is completed. 
\end{proof}

\section{Dual Jacobian of a doubly truncated hyperbolic tetrahedron}\label{jacobian2}

Let us consider the case when $T$ is a (mildly) doubly truncated tetrahedron depicted in Fig.~\ref{fig_tetr_doubly_trunc} with dihedral angles $\theta_i$ and edge lengths $\ell_i$, $i\in \{1,2,3,4,5,6\}$. We suppose that the vertices cut off by the respective polar planes are $\mathrm{v}_1$ and $\mathrm{v}_2$. 

\begin{figure}[ht]
\begin{center}
\includegraphics* [totalheight=5cm]{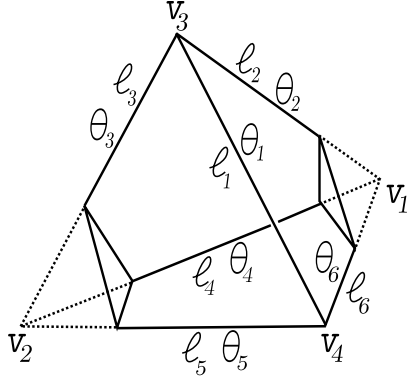}
\end{center}
\caption{Doubly truncated tetrahedron (mild truncation)} \label{fig_tetr_doubly_trunc}
\end{figure}

If $T$ is mildly truncated then the formula from Theorem~\ref{thm_jacobian1} applies. If $T$ is a prism truncated tetrahedron, as in Fig.~\ref{fig_tetr_prism_trunc}, with dihedral angles $\mu$, $\theta_i$ and edge lengths $\ell$, $\ell_i$, $i\in \{1,2,3,5,6\}$ then its Gram matrix is given by 
\begin{equation*}
G = \left( \begin{array}{cccc}
1 & -\cos \theta_1 & -\cos \theta_5 & -\cos \theta_3\\
-\cos \theta_1 & 1 & -\cos \theta_6 & -\cos \theta_2\\
-\cos \theta_5 & -\cos \theta_6 & 1 & -\cosh \ell\\
-\cos \theta_3 & -\cos \theta_2 & -\cosh \ell & 1
\end{array} \right),
\end{equation*}
which is a slightly different notation compared to \cite{KM2012, KM-err}. 

Each link $L(\mathrm{v}_k)$, $k=1,2$, is a hyperbolic quadrilateral with two right same-side angles, which can be seen as a hyperbolic triangle with a single truncated vertex. Each link $L(\mathrm{v}_k)$, $k=3,4$, is a spherical triangle. In the definitions of Section~\ref{preliminaries} we change each $b^{i}_{1j}$, with $i, j \in \{2,3,4\}$, $i\neq j$, for $b^{i}_{1j} + \sqrt{-1} \frac{\pi}{2}$ and each $b^{i}_{2j}$, with $i, j \in \{1,3,4\}$, $i\neq j$, for $b^{i}_{2j} + \sqrt{-1} \frac{\pi}{2}$. Thus, some of the vertex and face momenta become complex numbers. All the trigonometric rules of Section~\ref{preliminaries} still hold grace to \cite[Section~4.3]{Cho}. Computing the respective derivatives in a complete analogy to the proof of Theorem~\ref{thm_jacobian1}, we obtain the following statement.

\begin{figure}[ht]
\begin{center}
\includegraphics* [totalheight=5cm]{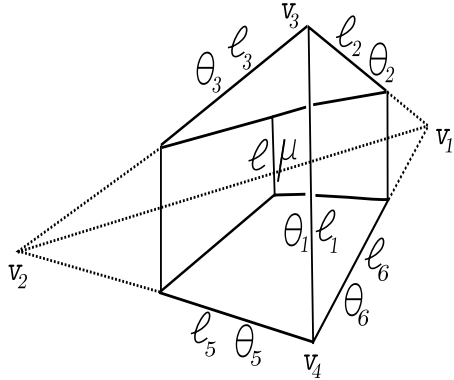}
\end{center}
\caption{Doubly truncated tetrahedron (prism truncation)} \label{fig_tetr_prism_trunc}
\end{figure}

\begin{theorem}\label{thm_jacobian2}
Let $T$ be a prism truncated tetrahedron depicted in Fig.~\ref{fig_tetr_prism_trunc}. Then by means of the analytic continuation $a_{12} := \sqrt{-1}\,\ell$, $\ell_{12} = \sqrt{-1}\,\mu$ we have
\begin{equation*}
\mathrm{Jac}^{\star}(T) := \frac{\partial (\mu, \ell_1, \ell_2, \ell_3, \ell_5, \ell_6)}{\partial (\ell, \theta_1, \theta_2, \theta_3, \theta_5, \theta_6)} = \frac{\partial (\ell_{12}, \ell_{34}, \ell_{13}, \ell_{23}, \ell_{24}, \ell_{14})}{\partial (a_{12}, a_{34}, a_{13}, a_{23}, a_{24}, a_{14})}.
\end{equation*}
\end{theorem}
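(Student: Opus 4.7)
The strategy is to show that the derivation of Theorem~\ref{thm_jacobian1} extends verbatim to the prism-truncated setting by analytic continuation. First, I would reconcile the two Gram matrices: for the mildly-truncated model with $\varepsilon_1=\varepsilon_2=-1$ the entry $-\cos a_{12}$ of $G$ is replaced, in the prism-truncated model, by $-\cosh\ell$. Since $\cos(\sqrt{-1}\,\ell)=\cosh\ell$ and $\sinh(\sqrt{-1}\,\mu)=\sqrt{-1}\,\sin\mu$, the assignments $a_{12}=\sqrt{-1}\,\ell$, $\ell_{12}=\sqrt{-1}\,\mu$ identify $G$ and the quantity $\sigma_{12}=\sinh\ell_{12}$ with their prism analogues. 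The geometric content of the substitution is that the links $L(\mathrm{v}_1)$ and $L(\mathrm{v}_2)$ become hyperbolic quadrilaterals with two same-side right angles — equivalently, truncated hyperbolic triangles — so that the shift $b^{i}_{1j}\mapsto b^{i}_{1j}+\sqrt{-1}\,\pi/2$ (and likewise for index $2$) prescribed by the authors converts the Sine and Cosine Laws for links into their truncated counterparts, as in \cite[Section~4.3]{Cho}.

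Second, I would observe that every identity used in the proof of Theorem~\ref{thm_jacobian1} — the Sine and Cosine Laws for faces and links, Lemmata~\ref{lemma_l_b}--\ref{lemma_detG}, together with the algebraic relations $\sigma^{2}_{ij}-(\sigma^{\prime}_{ij})^{2}=\varepsilon_i\varepsilon_j$ and $1-\varepsilon_l(\mu^{i}_{jk})^{2}=({\mu^{\prime}}^{i}_{jk})^{2}$ — persists as an identity in the complexified variables, since each is rational in $\sigma_{ij}$, $\sigma^{\prime}_{ij}$, $\mu^{i}_{jk}$ and ${\mu^{\prime}}^{i}_{jk}$. Hence each entry of $\mathrm{Jac}^{\star}(T)$ is computed by the same sequence of derivatives and rearrangements as in Section~\ref{jacobian1}, yielding once again the expression $-\eta\,\mathscr{D}\mathscr{S}\mathscr{D}$ after the above substitutions.

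Finally, I would apply the chain rule. Under $a_{12}=\sqrt{-1}\,\ell$, $\ell_{12}=\sqrt{-1}\,\mu$ one has $\partial/\partial a_{12}=-\sqrt{-1}\,\partial/\partial\ell$ and $\mathrm{d}\ell_{12}=\sqrt{-1}\,\mathrm{d}\mu$, so that the $(1,1)$-entry $\partial\ell_{12}/\partial a_{12}$ reduces directly to $\partial\mu/\partial\ell$; for the remaining entries any stray factor of $\sqrt{-1}$ introduced by differentiation with respect to $a_{12}$ or by the substitution of $\ell_{12}$ is absorbed by the explicit $\sqrt{-1}$ already carried by $\sigma_{12}=\sqrt{-1}\,\sin\mu$ or by the analytically continued vertex and face momenta. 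The result is that every entry of the right-hand side is real and coincides with the geometric derivative on the left. The main obstacle is precisely this sign- and phase-bookkeeping: one must check that the factors of $\sqrt{-1}$ distributed across $\eta$, $\mathscr{D}$ and $\mathscr{S}$ combine, entry by entry, to produce the real expression describing $\mathrm{Jac}^{\star}(T)$ in the prism-truncated case. This mirrors the analogous analytic continuation already established for the volume formulae in \cite{KM2012, MU}.
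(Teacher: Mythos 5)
Your proposal is correct and follows essentially the same route as the paper: the authors likewise shift $b^{i}_{1j}$ and $b^{i}_{2j}$ by $\sqrt{-1}\,\pi/2$, invoke \cite[Section~4.3]{Cho} to keep all the trigonometric laws of Section~\ref{preliminaries} valid for the complexified momenta, and then repeat the derivative computation of Theorem~\ref{thm_jacobian1} verbatim. Your additional attention to the bookkeeping of $\sqrt{-1}$ factors (e.g.\ $\sigma_{12}=\sinh\ell_{12}=\sqrt{-1}\sin\mu$ and $\partial/\partial a_{12}=-\sqrt{-1}\,\partial/\partial\ell$) only makes explicit what the paper leaves as ``a complete analogy.''
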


\section{Volume of a hyperbolic prism}

Let $\vec{\alpha}_n$ denote the $n$-tuple $(\alpha_1, \dots, \alpha_n)$ with $0 < \alpha_k < \pi$, $k=1,\dots,n$. Let $\vec{\beta}_n$ and $\vec{\gamma}_n$ be analogous $n$-tuples. Let $\Pi_n := \Pi_n(\vec{\alpha}_n, \vec{\beta}_n, \vec{\gamma}_n)$ be the hyperbolic $n$-sided prism depicted in Fig.~\ref{fig_prism_n}, with the respective dihedral angles, as shown in the picture. 

\begin{figure}[ht]
\begin{center}
\includegraphics* [totalheight=6cm]{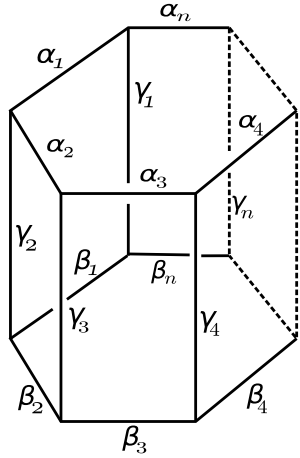}
\end{center}
\caption{The prism $\Pi_n(\vec{\alpha}_n, \vec{\beta}_n, \vec{\gamma}_n)$} \label{fig_prism_n}
\end{figure}

Let $S_k$, $k=1,\dots, n$, be the supporting hyperplane for the $k$-th side face of the prism $\Pi_n$ (we start numbering the faces anti-clockwise from the side face adjacent to the angles $\alpha_1$, $\beta_1$ and $\gamma_1$, $\gamma_2$), and let $S_0$ and $S_{n+1}$ be those of the top and the bottom face, correspondingly. For each $S_k$, $k=0,\dots, n+1$, let $S^{+}_k$ be the respective half-space containing the unit outer normal to it. Let $S^{-}_k = \mathbb{H}^3 \setminus S^{+}_k$. Then $\Pi_n = \bigcap^{n+1}_{i=0} S^{-}_i$. 

Let $T := T(\alpha, \alpha^{\prime}, \beta, \beta^{\prime}, \gamma; \ell)$ be the prism truncated tetrahedron depicted in Fig.~\ref{fig_tetr_prism_trunc2}. Here $\alpha$, $\alpha^{\prime}$, $\beta$, $\beta^{\prime}$ and $\gamma$ are the respective dihedral angles, $\ell$ is the length of the respective edge. The volume $\mathrm{Vol}\,T$ of the tetrahedron $T$ is given by \cite[Theorem~1]{KM2012}\footnote{in Section~\ref{volume-modified} we give a simplified formula for the volume of $T$.}. Let $v(\alpha, \alpha^{\prime}, \beta, \beta^{\prime}, \gamma; \ell) := \mathrm{Vol}\,T(\alpha, \alpha^{\prime}, \beta, \beta^{\prime}, \gamma; \ell)$ denote the respective volume function. 

\begin{figure}[ht]
\begin{center}
\includegraphics* [totalheight=6cm]{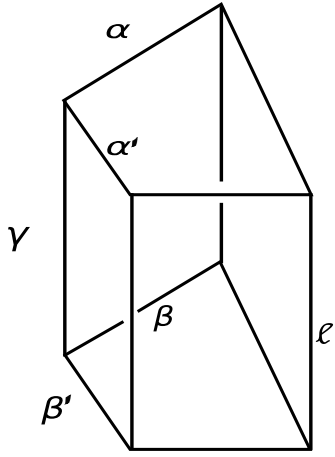}
\end{center}
\caption{The prism truncated tetrahedron $T(\alpha, \alpha^{\prime}, \beta, \beta^{\prime}, \gamma; \ell)$} \label{fig_tetr_prism_trunc2}
\end{figure}

Let $p_{0}p_{n+1}$ be the common perpendicular to $S_0$ and $S_{n+1}$. Let also define $k\oplus m := (k+m)\, \mathrm{mod}\, n$, for $k,m \in \mathbb{N}$. Then we can state the main theorem of this section.

\begin{theorem}\label{thm_prism_volume}
Let $\Pi_n = \Pi_n(\vec{\alpha}_n, \vec{\beta}_n, \vec{\gamma}_n)$ be a hyperbolic $n$-sided prism, as in Fig.~\ref{fig_prism_n}. If $p_{0}p_{n+1} \subset \Pi_n$, then the volume of $\Pi_n$ is given by the formula
\begin{equation*}
\mathrm{Vol}\,\Pi_n = \sum^{n}_{k=1} v(\alpha_{k}, \alpha_{k\oplus 1}, \beta_{k}, \beta_{k\oplus 1}, \gamma_{k\oplus 1}; \ell^{\star}),
\end{equation*}
where $\ell^{\star}$ is the unique solution to the equation $\frac{\partial \Phi}{\partial \ell}(\ell) = 0$, with
\begin{equation*}
\hspace*{0.1in} \Phi(\ell) := \pi \ell + \sum^{n}_{k=1} v(\alpha_{k}, \alpha_{k\oplus 1}, \beta_{k}, \beta_{k\oplus 1}, \gamma_{k\oplus 1}; \ell).
\end{equation*}
\end{theorem}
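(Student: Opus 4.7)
The plan is to decompose $\Pi_n$ into $n$ prism truncated tetrahedra fanned around the common perpendicular $p_0 p_{n+1}$, and then to recognise the stationarity equation $\partial\Phi/\partial\ell = 0$ as the angle-closing condition around the shared edge. First I would, using the hypothesis $p_0 p_{n+1} \subset \Pi_n$, cut $\Pi_n$ by $n$ totally geodesic half-planes, each containing the line through $p_0 p_{n+1}$ and chosen so as to separate consecutive side faces of the prism. The $n$ resulting cells $T_1, \dots, T_n$ share $p_0 p_{n+1}$, of length $\ell^{\star} := |p_0 p_{n+1}|$, as a common edge; an inspection of their combinatorial type and of the six relevant dihedrals identifies each cell, up to isometry, with a prism truncated tetrahedron
$$T_k \cong T(\alpha_k, \alpha_{k\oplus 1}, \beta_k, \beta_{k\oplus 1}, \gamma_{k\oplus 1}; \ell^{\star})$$
in the sense of Section~\ref{jacobian2}.

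Write $\mu_k(\ell)$ for the dihedral angle of $T(\alpha_k, \alpha_{k\oplus 1}, \beta_k, \beta_{k\oplus 1}, \gamma_{k\oplus 1}; \ell)$ at the distinguished edge of length $\ell$, with the five angle parameters frozen. The closing up of the $T_k$ around $p_0 p_{n+1}$ inside $\Pi_n$ yields the gluing condition
$$\sum_{k=1}^{n} \mu_k(\ell^{\star}) = 2\pi. \qquad (\star)$$
The heart of the argument is to match $(\star)$ with $\partial \Phi/\partial \ell = 0$. Starting from the Schl\"afli formula for each $T_k$ and applying the analytic continuation $a_{12} = \sqrt{-1}\,\ell$, $\ell_{12} = \sqrt{-1}\,\mu$ that underlies Theorem~\ref{thm_jacobian2}, the edge contribution $-\tfrac{1}{2}\ell_{12}\,d a_{12}$ translates into a term $-\tfrac{1}{2}\mu_k\,d\ell$ in the prism truncated parametrisation, while the Schl\"afli contributions of the remaining five edges vanish because the five angle parameters are held fixed. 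This yields the pointwise identity
$$\frac{\partial v}{\partial \ell}\bigl(\alpha_k, \alpha_{k\oplus 1}, \beta_k, \beta_{k\oplus 1}, \gamma_{k\oplus 1}; \ell\bigr) = -\frac{\mu_k(\ell)}{2},$$
and summing over $k$ together with $d(\pi \ell)/d\ell = \pi$ gives $\partial \Phi/\partial \ell(\ell) = \pi - \tfrac{1}{2}\sum_{k=1}^n \mu_k(\ell)$, so that $\partial \Phi/\partial \ell(\ell) = 0$ is precisely $(\star)$.

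For the uniqueness of $\ell^{\star}$ I would read off $\mu_k'(\ell)$ from the $(\ell, \mu)$-entry of the dual Jacobian of Theorem~\ref{thm_jacobian2} as a rational expression in the $\sigma$ and $\sigma'$ quantities of $T_k$ which are of a definite sign in the prism truncated regime; this shows that $\sum_k \mu_k(\ell)$ is strictly monotone in $\ell$ and hence $\Phi$ is strictly convex (or concave), while a boundary check at $\ell \to 0^{+}$ and $\ell \to +\infty$ supplies existence of a (therefore unique) stationary point, which by construction equals $|p_0 p_{n+1}|$; volume additivity over the decomposition then delivers the claimed formula. \textbf{The main obstacle} is the derivative identity $\partial v/\partial \ell = -\mu_k/2$: its verification requires careful bookkeeping of the sign conventions in the analytic continuation that swaps the roles of lengths and dihedral angles, and it is precisely where the dual Jacobian machinery developed in Sections~\ref{jacobian1}--\ref{jacobian2} pays off.
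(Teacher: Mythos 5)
Your overall strategy --- fanning $\Pi_n$ around $p_0p_{n+1}$ into $n$ prism truncated tetrahedra, identifying $\partial\Phi/\partial\ell=0$ with the closure condition $\sum_k\mu_k=2\pi$ via the Schl\"afli formula, and deducing uniqueness from monotonicity of $\sum_k\mu_k(\ell)$ --- is the same as the paper's, but two steps have genuine gaps.

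First, the monotonicity argument does not go through as you state it. The $(\ell,\mu)$-entry of $\mathrm{Jac}^{\star}(T)$ is the diagonal entry $-\eta\,\omega_{kl}\,\sigma^2_{kl}$ of Theorem~\ref{thm_jacobian1}, and $\omega_{kl}\,\sigma^2_{kl}$ is a signed sum of four terms built from the $\sigma^{\prime}$ of the \emph{other} edges, which in the prism truncated regime are cosines of the dihedral angles $\theta_i$; these can have either sign (and vanish when the angles are right), so the claim that this entry is ``of a definite sign'' is unfounded. This is precisely the difficulty the proof of Proposition~\ref{prop_prism_volume1} is designed to avoid: it uses the \emph{off-diagonal} entries, which specialise to products such as $-\eta\sin\mu_k\sinh\ell_6\cosh\ell_2$ of manifestly positive factors, concludes that the edges of the top and bottom quadrilateral faces all shrink as $\ell$ grows, and only then deduces $\partial\mu_k/\partial\ell>0$ by a two-dimensional argument (nesting the deformed quadrilateral inside the original one and invoking the angle-defect formula). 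You would need either to actually prove the sign of the diagonal entry or to switch to this indirect route.

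Second, the hypothesis $p_0p_{n+1}\subset\Pi_n$ does not guarantee that each cutting plane $P_k$ meets its side face $S_k$ inside the prism. When some $P_k$ meets $S_k$ partially or entirely outside $\Pi_n$, the cell $T_k$ is a ``butterfly'' configuration rather than a genuine prism truncated tetrahedron, the fan is not a partition, and $v(\,\cdot\,;\ell)$ must be reinterpreted as a signed volume $\mathrm{Vol}\,T^{(i)}_k-\mathrm{Vol}\,T^{(o)}_k$. The paper devotes Proposition~\ref{prop_prism_volume2} to exactly this: it matches all partial derivatives of $v$ with those of the signed volume via Schl\"afli, fixes the constant by degenerating to Euclidean prisms, and reruns the monotonicity argument with some edge lengths negated. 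Your proposal covers only the situation of Proposition~\ref{prop_prism_volume1}, i.e., it proves the theorem under a hypothesis strictly stronger than the one stated.
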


Let $P_k$, $k=1,\dots,n$, be the plane containing $p_0p_{n+1}$ and orthogonal to $S_k$. First, we consider the case when $p_0p_{n+1}$ lies inside the prism $\Pi_n$ and the planes $P_k$, $k=1,\dots, n$, divide the prism $\Pi_n$ into $n$ prism truncated tetrahedra, as shown in Fig.~\ref{fig_prism_subdivision1}. 

\begin{figure}[ht]
\begin{center}
\includegraphics* [totalheight=6cm]{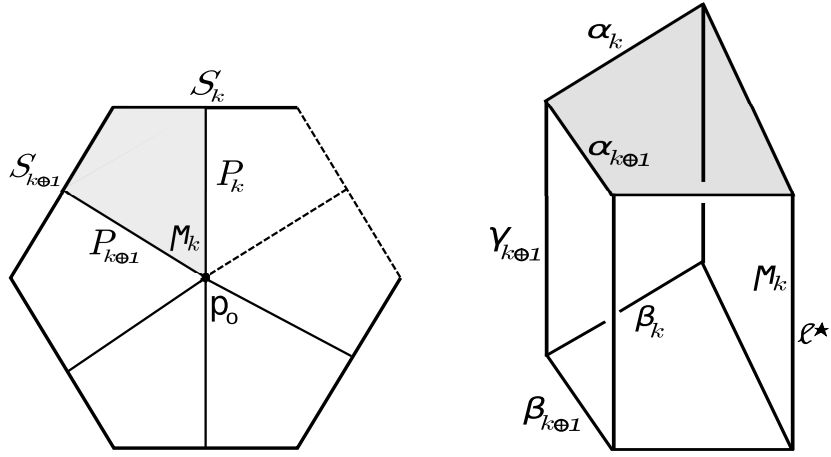}
\end{center}
\caption{The decomposition of $\Pi_n$ (top view, on the left) and the prism truncated tetrahedron $T_k$ (on the right)} \label{fig_prism_subdivision1}
\end{figure}

Then each $P_k$ meets the $k$-th side face of the prism $\Pi_n$. Thus, the planes $S_0$, $S_k$, $S_{k\oplus 1}$ and $S_{n+1}$ together with $P_k$ and $P_{k\oplus 1}$ become the supporting planes for the faces of a prism truncated tetrahedron, which we denote by $T_k$. Each $P_k$ is orthogonal to $S_k$, $S_{0}$ and $S_{n+1}$. The dihedral angles of $T_k$ inherited from the prism $\Pi_n$ are easily identifiable. Let $\mu_k$ denote the dihedral angle along the edge $p_0p_{n+1}$ and let $\ell^{\star}$ be its length. Then we have $T_k = T(\alpha_{k}, \alpha_{k\oplus 1}, \beta_{k}, \beta_{k\oplus 1}, \gamma_{k\oplus 1}; \ell^{\star})$, $k=1,\dots,n$. Clearly,
\begin{equation*}
\mathrm{Vol}\,\Pi_n = \sum^{n}_{k=1} \mathrm{Vol}\,T_k = \sum^{n}_{k=1} v(\alpha_{k}, \alpha_{k\oplus 1}, \beta_{k}, \beta_{k\oplus 1}, \gamma_{k\oplus 1}; \ell^{\star}).
\end{equation*}
Thus, we have to prove only the following statement.

\begin{proposition}\label{prop_prism_volume1}
If the common perpendicular $p_0p_{n+1}$ is inside the prism $\Pi_n$ and each $P_k$ meets the respective side also inside $\Pi_n$, $k=1,\dots,n$, then the equation $\frac{\partial \Phi}{\partial \ell} = 0$ has a unique solution $\ell = \ell^{\star}$, the length of $p_0p_{n+1}$.
\end{proposition}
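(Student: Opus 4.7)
The plan is to apply the Schl\"afli formula to each prism-truncated tetrahedron $T_k(\ell)$ and to rewrite $\Phi'(\ell)=0$ as the angular-balance condition $\sum_{k=1}^{n}\mu_k(\ell)=2\pi$, where $\mu_k(\ell)$ denotes the dihedral angle of $T_k(\ell)$ along the common edge $p_0 p_{n+1}$ of length $\ell$. Existence of the solution will then follow from the geometric decomposition, and uniqueness from the sign of a diagonal entry of the dual Jacobian produced in Theorem~\ref{thm_jacobian2}.

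First I would derive the single-edge Schl\"afli identity for $v_k$. Theorem~\ref{thm_jacobian2} realises $v_k(\ell)$ as the analytic continuation of a mildly truncated tetrahedron volume under $a_{12}=\sqrt{-1}\,\ell$ and $\ell_{12}=\sqrt{-1}\,\mu_k$. Plugging this continuation into the ordinary Schl\"afli identity $dV=-\tfrac{1}{2}\sum_{i<j}\ell_{ij}\,da_{ij}$, while holding fixed the five dihedral angles of $T_k$ inherited from the prism, collapses the sum to the $(12)$-term only, and (after taking the real part) yields
\begin{equation*}
v_k'(\ell)\;=\;-\tfrac{1}{2}\,\mu_k(\ell).
\end{equation*}
Summing over $k$ gives
\begin{equation*}
\Phi'(\ell)\;=\;\pi - \tfrac{1}{2}\sum_{k=1}^{n}\mu_k(\ell),
\end{equation*}
so $\Phi'(\ell)=0$ is equivalent to the closing-up condition $\sum_{k=1}^{n}\mu_k(\ell)=2\pi$.

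Geometrically, this latter condition says exactly that the $n$ wedges $T_k(\ell)$ fit without overlap or gap around a common edge of length $\ell$, assembling into an embedded non-singular hyperbolic polyhedron. Under the hypotheses of the proposition (the common perpendicular $p_0 p_{n+1}$ lies inside $\Pi_n$ and each $P_k$ meets the corresponding side inside $\Pi_n$), this embedded polyhedron is $\Pi_n$ itself, realised precisely by the actual length $\ell^{\star}$ of $p_0 p_{n+1}$. Hence $\Phi'(\ell^{\star})=0$, which gives the existence half.

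For uniqueness it remains to show that $\Phi'$ is strictly monotonic. Differentiating once more,
\begin{equation*}
\Phi''(\ell)\;=\;-\tfrac{1}{2}\sum_{k=1}^{n}\mu_k'(\ell),
\end{equation*}
and by Theorem~\ref{thm_jacobian2} each $\mu_k'(\ell)=\partial\mu_k/\partial\ell$ is the $(1,1)$-entry $-\eta\,\omega_{12}\,\sigma_{12}^{2}$ of the dual Jacobian $-\eta\,\mathscr{D}\mathscr{S}\mathscr{D}$ of $T_k(\ell)$ under the analytic continuation. The main obstacle of the proof is the sign analysis for this family of entries: using the explicit forms of $\eta$, $\sigma_{12}$, $\omega_{12}$ given in Theorem~\ref{thm_jacobian1} together with the non-degeneracy of the tetrahedra $T_k(\ell)$ throughout the admissible range of $\ell$, one has to verify that $\sum_k \mu_k'(\ell)$ keeps a fixed strict sign. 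Once this is established, $\Phi''$ is nowhere zero on the admissible interval, $\Phi'$ is strictly monotonic, and $\ell=\ell^{\star}$ is its unique root.
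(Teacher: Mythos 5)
Your existence argument coincides with the paper's: apply the Schl\"afli formula to each $T_k(\ell)$ to get $\frac{\partial \Phi}{\partial \ell}=\pi-\frac{1}{2}\sum_{k=1}^{n}\mu_k(\ell)$, and observe that $\sum_k\mu_k(\ell)=2\pi$ is exactly the condition that the $T_k(\ell)$ close up around the common edge into $\Pi_n$, which under the hypotheses happens precisely at $\ell=\ell^{\star}$.

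The gap is in the uniqueness half. You correctly reduce uniqueness to the statement that $\sum_k\mu_k'(\ell)$ keeps a fixed strict sign, and you correctly identify $\mu_k'(\ell)$ with the $(1,1)$-entry $-\eta\,\omega_{12}\,\sigma_{12}^{2}$ of the continued dual Jacobian; but you then declare the sign analysis to be ``the main obstacle'' and leave it unverified. That step \emph{is} the uniqueness proof, and the route you sketch is not a routine verification: under the continuation $a_{12}=\sqrt{-1}\,\ell$, $\ell_{12}=\sqrt{-1}\,\mu$ one has $\sigma_{12}=\sqrt{-1}\,\sin\mu$, so $\sigma_{12}^{2}=-\sin^{2}\mu<0$, and the signs of $\eta$ (a square root of a ratio of Gram determinants evaluated on complexified data) and of $\omega_{12}$ (a combination of continued $\sigma'$-terms) each need a separate argument that Theorems~\ref{thm_jacobian1} and~\ref{thm_jacobian2} do not supply. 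The paper avoids exactly this computation: it reads off the \emph{off-diagonal} entries of the first column of the dual Jacobian, namely $\partial\ell_i/\partial\ell=-\eta\sin\mu_k\sinh\ell_j\cosh\ell_i$ for the four horizontal edges $i\in\{2,3,5,6\}$, notes that these are all negative, concludes that increasing $\ell$ shrinks the top and bottom quadrilateral faces of $T_k$ so that the new quadrilateral can be nested inside the old one with three vertices and two sides matched, and then applies the angle-defect (area) formula for hyperbolic polygons to deduce $\mu_k'(\ell)>0$. To complete your proof you must either carry out the algebraic sign determination you postponed or replace it with a geometric monotonicity argument of this kind.
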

\begin{proof}
Let us consider the collection of prism truncated tetrahedra $T_k = T(\alpha_k, \alpha_{k\oplus 1}, \beta_k, \beta_{k\oplus 1}, \gamma_{k\oplus 1}; \ell)$, $k=1,\dots,n$. Each pair $\{T_{k}, T_{k\oplus 1}\}$ of them has an isometric face corresponding to the plane $P_{k\oplus 1}$. Indeed, each such face is completely determined by the plane angles (two right angles at the side of length $\ell$, the angles $\alpha_k$ and $\beta_{k}$ at the opposite side) and one side length. We obtain the prism $\Pi_n(\vec{\alpha}_n, \vec{\beta}_n, \vec{\gamma}_n)$ by glueing the tetrahedra $T_k$ together along the faces $P_k$, $k=1,\dots,n$, in the respective order. Their edges of length $\ell$ match together, and one obtains a prism if the angle sum of the dihedral angles $\mu_k$, $k=1,\dots,n$, along them equals $2\pi$. We have that
\begin{equation*}
\frac{\partial \Phi}{\partial \ell} = \pi + \sum^{n}_{k=1} \frac{\partial v}{\partial \ell}(\alpha_{k}, \alpha_{k\oplus 1}, \beta_{k}, \beta_{k\oplus 1}, \gamma_{k\oplus 1}; \ell).
\end{equation*}
Since $v$ is the volume function from \cite[Theorem~1]{KM2012}, then by applying the Schl\"{a}fli formula \cite[Equation~1]{Milnor} one obtains 
\begin{equation*}
\frac{\partial \Phi}{\partial \ell} = \pi - \frac{1}{2}\sum^{n}_{k=1} \mu_k.
\end{equation*}
Thus, whenever the tetrahedra $T_k$ constitute a prism, we have $\sum^{n}_{k=1} \mu_k = 2\pi$ or, equivalently, $\frac{\partial \Phi}{\partial \ell} = 0$. The length $\ell$ in this case is exactly the length of the common perpendicular $p_0p_{n+1}$ to the planes $S_0$ and $S_{n+1}$. 

The rest is to prove that $\ell = \ell^{\star}$ is a unique solution. In order to do so, we shall show that $\frac{\partial \mu_k}{\partial \ell} > 0$, $k=1,\dots,n$. By using Theorem~\ref{thm_jacobian2} we get the following formulae for a prism truncated tetrahedron (as depicted in Fig.~\ref{fig_tetr_prism_trunc}): 
\begin{equation*}
\frac{\partial \ell_2}{\partial \ell} = - \eta \sin\mu_k \sinh\ell_6 \cosh\ell_2,\hspace*{0.1in}\frac{\partial \ell_3}{\partial \ell} = - \eta \sin\mu_k \sinh\ell_5 \cosh\ell_3,
\end{equation*}
\begin{equation*}
\frac{\partial \ell_5}{\partial \ell} = - \eta \sin\mu_k \sinh\ell_3 \cosh\ell_5,\hspace*{0.1in}\frac{\partial \ell_6}{\partial \ell} = - \eta \sin\mu_k \sinh\ell_2 \cosh\ell_6.
\end{equation*}
Note that the above derivatives are all negative. In our present notation it means that for each prism truncated tetrahedron $T_k$, $k=1,\dots,n$, the edges of the top and bottom faces inherited from the prism $\Pi_n$ diminish their length if we increase solely the parameter $\ell$. Recall that $T_k = T(\alpha_k, \alpha_{k\oplus 1}, \beta_k, \beta_{k\oplus 1}, \gamma_{k\oplus 1}; \ell)$, and let us denote $T^{\prime}_k := T(\alpha_k, \alpha_{k\oplus 1}, \beta_k, \beta_{k\oplus 1}, \gamma_{k\oplus 1}; \ell^{\prime})$ with $\ell^{\prime} > \ell$.  

\begin{figure}[ht]
\begin{center}
\includegraphics* [totalheight=7cm]{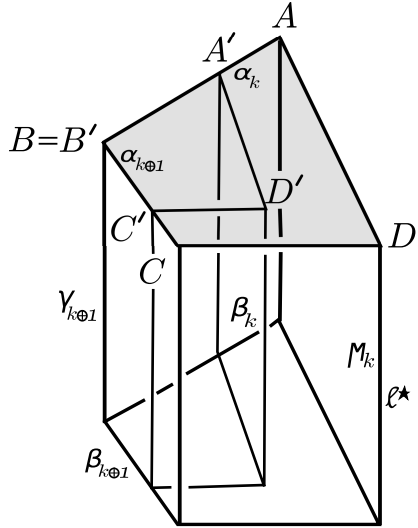}
\end{center}
\caption{Prisms $T_k$ and $T^{\prime}_k$ with top faces marked} \label{abcd_prisms}
\end{figure}

Let $ABCD$ be the top (equiv., bottom) face of $T_k$, as shown in Fig.~\ref{abcd_prisms}, and $A^{\prime}B^{\prime}C^{\prime}D^{\prime}$ be the top (equiv., bottom) face of $T^{\prime}_k$. Since the dihedral angles accept for $\mu_k$ and $\mu^{\prime}_k$ remain the same, the plane angles of $ABCD$ at $A$, $B$, $C$ and those of $A^{\prime}B^{\prime}C^{\prime}D^{\prime}$ at $A^{\prime}$, $B^{\prime}$ and $C^{\prime}$ are respectively equal. One sees easily that we can match then $ABCD$ and $A^{\prime}B^{\prime}C^{\prime}D^{\prime}$ such that $B$ and $B^{\prime}$ coincide, the sides $AB$ and $A^{\prime}B^{\prime}$, $BC$ and $B^{\prime}C^{\prime}$ overlap and the point $D^{\prime}$ lies inside the quadrilateral $ABCD$. Then the area of $A^{\prime}B^{\prime}C^{\prime}D^{\prime}$ is less than that of $ABCD$. Equivalently, by the angle defect formula \cite[Theorem~1.1.7]{Buser}, $\mu^{\prime}_k > \mu_k$. Thus, $\frac{\partial \mu_k}{\partial \ell} > 0$, $k=1,\dots, n$, and the proposition follows.
\end{proof}

However, there is a possibility that, although the common perpendicular $p_0p_{n+1}$ is entirely inside the prism $\Pi_n$, one (or several) of the planes $P_k$ meets the respective $S_k$ partially outside of the face $S_k$. 

First we consider the case when a single plane $P_k$ meets $S_k$ entirely outside, as depicted in Fig.~\ref{fig_prism_subdivision2}. Like this, we obtain the figure shaded in grey, that consists of two triangular prisms sharing an edge. 

\begin{figure}[ht]
\begin{center}
\includegraphics* [totalheight=6cm]{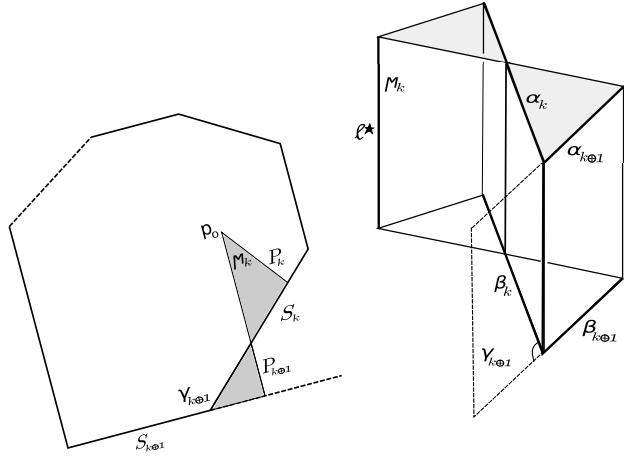}
\end{center}
\caption{The decomposition of $\Pi_n$ (top view, on the left) and the ``butterfly'' prism truncated tetrahedron $T_k$ (on the right)} \label{fig_prism_subdivision2}
\end{figure}

Second we consider the case when a single plane $P_k$ meets $S_k$ partially outside, as depicted in Fig.~\ref{fig_prism_subdivision3}. Like this, we obtain a more complicated figure that consists of two tetrahedra sharing an edge (one of which has two truncated vertices).

\begin{figure}[ht]
\begin{center}
\includegraphics* [totalheight=6cm]{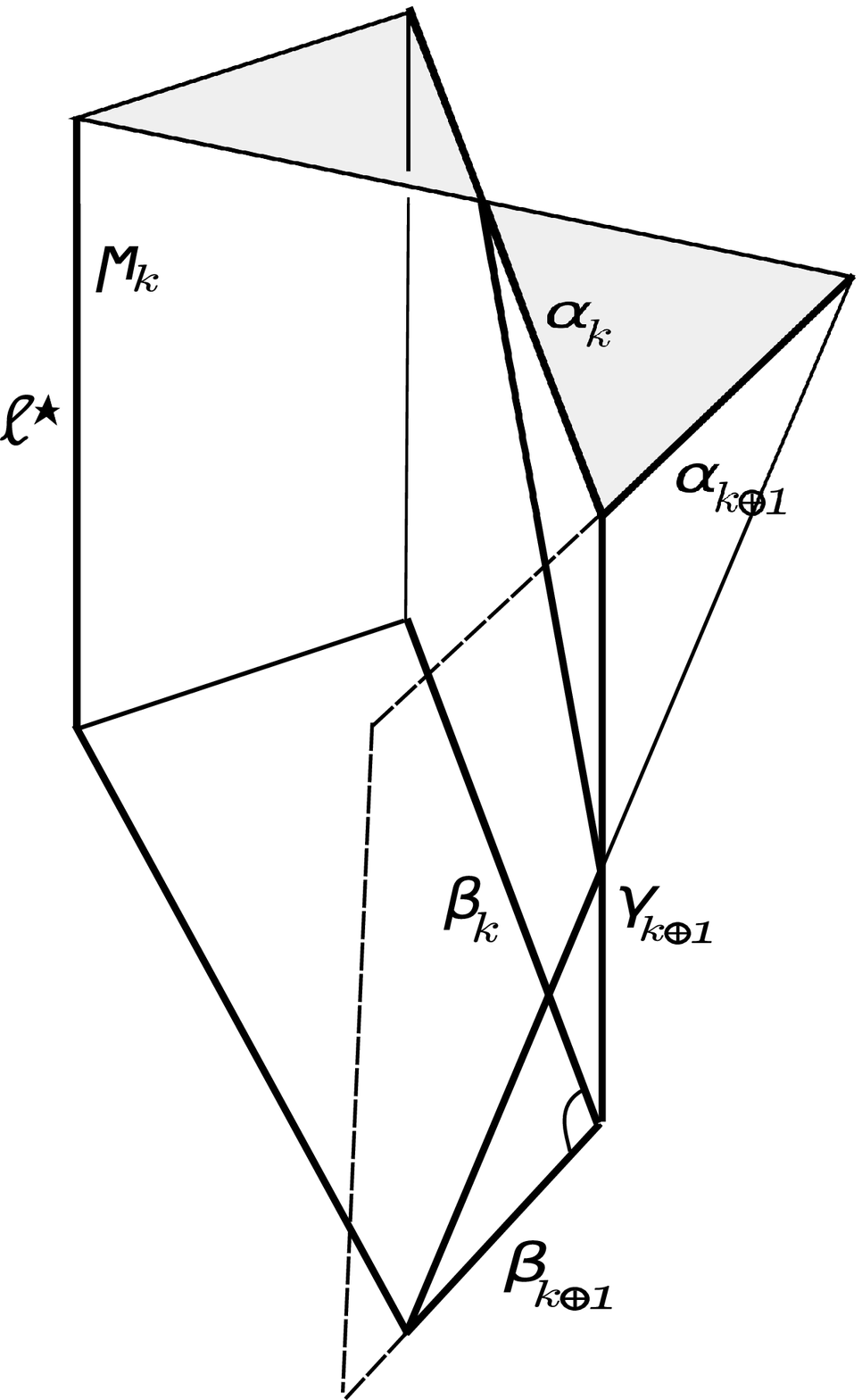}
\end{center}
\caption{Another ``butterfly'' prism truncated tetrahedron $T_k$} \label{fig_prism_subdivision3}
\end{figure}

Thus the planes $S_0$, $P_{k}$, $P_{k\oplus 1}$, $S_k$, $S_{k\oplus 1}$ and $S_{n+1}$ bound a ``butterfly'' prism. We put $k=1$, for clarity. In the general case, $k \geq 2$, one uses induction on the number of planes $P_k$ meeting $S_k$ outside of $\Pi_n$. Here, some other cases of ``butterfly'' prisms are possible.

\begin{proposition}\label{prop_prism_volume2}
If the common perpendicular $p_0p_{n+1}$ is completely inside the prism $\Pi_n$, the plane $P_1$ meets the plane $S_1$ outside of $\Pi_n$, and all other $P_k$, $k=2,\dots,n$, meet the respective side faces inside $\Pi_n$, then the volume of the prism equals
\begin{equation*}
\mathrm{Vol}\,\Pi_n = \sum^{n}_{k=1} v(\alpha_{k}, \alpha_{k\oplus 1}, \beta_{k}, \beta_{k\oplus 1}, \gamma_{k\oplus 1}; \ell^{\star}),
\end{equation*}
where $\ell^{\star}$ is the unique solution to the equation $\frac{\partial \Phi}{\partial \ell}(\ell) = 0$, with
\begin{equation*}
\hspace*{0.1in} \Phi(\ell) := \pi \ell + \sum^{n}_{k=1} v(\alpha_{k}, \alpha_{k\oplus 1}, \beta_{k}, \beta_{k\oplus 1}, \gamma_{k}; \ell).
\end{equation*}
\end{proposition}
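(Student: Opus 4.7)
The plan is to mirror the argument of Proposition~\ref{prop_prism_volume1}, with the word ``decomposition'' replaced by ``signed decomposition''. The setting is that for $k=2,\dots,n$ the planes $P_k$ still cut off honest prism truncated tetrahedra $T_k$ sitting inside $\Pi_n$, while the ``tetrahedron'' $T_1$ formally attached to the face $S_1$ is realised by the butterfly region of Fig.~\ref{fig_prism_subdivision2} or~\ref{fig_prism_subdivision3}.

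First, I would set up the signed decomposition $\mathrm{Vol}\,\Pi_n = \varepsilon_1 \mathrm{Vol}\,T_1 + \sum_{k=2}^n \mathrm{Vol}\,T_k$, where the $T_k$ are the prism truncated tetrahedra with common apical edge $p_0 p_{n+1}$ defined exactly as in Proposition~\ref{prop_prism_volume1}, and $\varepsilon_1 \in \{+1,-1\}$ records the sign with which the butterfly piece $T_1$ contributes (the two lobes cancel partly, leaving the net portion of $\Pi_n$ adjacent to $S_1$). Neighbouring $T_k, T_{k\oplus 1}$ still have pairwise isometric faces along the plane $P_{k\oplus 1}$ by the same plane-geometric check as before (the face is determined by the inherited dihedral angles and the single side length $\ell$), so the gluing procedure of Proposition~\ref{prop_prism_volume1} survives verbatim in the signed sense.

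Second, I would verify that the volume function $v$ from \cite[Theorem~1]{KM2012}, itself defined by analytic continuation from the mildly truncated to the prism truncated regime (as in Section~\ref{jacobian2}), further continues so as to give the correct signed volume of $T_1$ in the butterfly regime. The only property really used downstream is that the Schl\"{a}fli formula $\partial v / \partial \ell = -\tfrac{1}{2}\,\mu_k$ persists along this continuation, with $\mu_k$ the dihedral angle of $T_k$ along $p_0 p_{n+1}$. Summing then gives
\[
\frac{\partial \Phi}{\partial \ell} = \pi - \tfrac{1}{2} \sum_{k=1}^{n} \mu_k(\ell),
\]
and the vanishing of this derivative is exactly the closure condition $\sum_k \mu_k = 2\pi$ that forces the $T_k$ to fit together into $\Pi_n$, identifying $\ell^{\star}$ with the length of $p_0 p_{n+1}$.

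Third, uniqueness follows as in Proposition~\ref{prop_prism_volume1} from $\partial \mu_k / \partial \ell > 0$ for every $k$. For $k \geq 2$ the geometric argument of Proposition~\ref{prop_prism_volume1} applies unchanged. For the butterfly $T_1$, the edge-length derivatives of Theorem~\ref{thm_jacobian2} still hold for the analytically continued quantities; the top and bottom faces of $T_1$ continue to shrink as $\ell$ grows, and the angle defect formula \cite[Theorem~1.1.7]{Buser}, applied with due attention to signed area in the butterfly case, still gives $\partial \mu_1 / \partial \ell > 0$. Strict monotonicity of $\sum_k \mu_k$ in $\ell$ then forces $\ell^{\star}$ to be unique.

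The main obstacle is the first step: one has to justify carefully that the analytic continuation of $v$ into the butterfly regime remains real valued and coincides with the signed volume needed to reconstruct $\mathrm{Vol}\,\Pi_n$. This in turn requires tracking the branches of the complex logarithms and dilogarithms in the formula of \cite[Theorem~1]{KM2012} through the substitutions $a_{12} = \sqrt{-1}\,\ell$, $\ell_{12} = \sqrt{-1}\,\mu$ of Section~\ref{jacobian2}, and checking that the would-be imaginary contributions cancel. Once this is settled, the Schl\"{a}fli computation and the monotonicity argument are direct transcriptions of the proof of Proposition~\ref{prop_prism_volume1}; the inductive step to several butterfly faces raises no essentially new issue.
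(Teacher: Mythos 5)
Your overall skeleton (signed decomposition, Schl\"{a}fli closure condition $\sum_k \mu_k = 2\pi$, monotonicity of $\sum_k \mu_k$ in $\ell$) matches the paper's, but the step you yourself flag as ``the main obstacle'' is precisely the content of the paper's proof, and you do not carry it out. The required identity is
\begin{equation*}
v(\alpha_1,\alpha_2,\beta_1,\beta_2,\gamma_2;\ell) \;=\; \mathrm{Vol}\,T^{(i)}_1 - \mathrm{Vol}\,T^{(o)}_1,
\end{equation*}
where $T^{(i)}_1$ and $T^{(o)}_1$ are the two lobes of the butterfly (note that writing the contribution as a single signed term $\varepsilon_1\,\mathrm{Vol}\,T_1$ obscures this: the butterfly contributes one lobe positively and one negatively, the negative lobe exactly cancelling the overlap with $T_2$). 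The route you propose for this --- tracking branches of the logarithms and dilogarithms of \cite[Theorem~1]{KM2012} through the analytic continuation --- is not what is needed and would be very hard to complete. The paper instead avoids the explicit formula entirely: it shows that $V := \mathrm{Vol}\,T^{(i)}_1 - \mathrm{Vol}\,T^{(o)}_1$ and $v$ have the same partial derivatives with respect to every dihedral angle, by applying the Schl\"{a}fli formula to each lobe separately (the dihedral angle $\theta$ along the common edge of the two lobes enters the two lobes through $\gamma$ and $\hat{\gamma}=\pi-\gamma$ respectively, so the $\ell_\theta$-terms cancel), and then pins down the additive constant by degenerating $\alpha_1=\alpha_n=\beta_1=\beta_n=\pi/2$, where both $v$ and the two lobes collapse to Euclidean prisms of zero hyperbolic volume. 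Without some argument of this kind your proof establishes only that $\Phi$ has the right $\ell$-derivative, not that its value at $\ell^\star$ is $\mathrm{Vol}\,\Pi_n$.

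A second, smaller inaccuracy is in your monotonicity step for the butterfly piece: it is not true that ``the top and bottom faces of $T_1$ continue to shrink as $\ell$ grows.'' Because $T^{(o)}_1$ enters with a negative sign, one must replace $\ell_3$ and $\ell_5$ by $-\ell_3$ and $-\ell_5$ before applying the derivative formulae of Theorem~\ref{thm_jacobian2}; the recomputation then shows that $\ell_2$ and $\ell_6$ decrease while $\ell_3$ and $\ell_5$ \emph{increase}. The conclusion $\partial\mu_1/\partial\ell>0$ still follows, but only because the inner triangular base of the perturbed $T^{(i)}_1$ can then be nested inside the original one, so the area-defect comparison applies to $T^{(i)}_1$ alone rather than to the full quadrilateral as in Proposition~\ref{prop_prism_volume1}. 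Your appeal to ``signed area'' gestures in the right direction but does not substitute for this sign analysis.
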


\begin{proof}
We start with the case of a ``butterfly'' prism depicted in Fig.~\ref{fig_prism_subdivision2}. Let us observe that the ``butterfly'' prism $T_1$ overlaps with the subsequent prism truncated tetrahedron $T_2$ exactly on its part $T^{(o)}_1$ outside of $\Pi_n$. The part of $T_1$ inside $\Pi_n$, called $T^{(i)}_1$, contributes to the total volume of the prism. The volume of $T^{(o)}_1$ is excessive in the respective volume formula and should be subtracted. In fact, we prove that
\begin{equation*}
v(\alpha_1,\alpha_2,\beta_1,\beta_2,\gamma_2; \ell^{\star}) = V:= \mathrm{Vol}\,T^{(i)}_1 - \mathrm{Vol}\,T^{(o)}_1,
\end{equation*}
which implies that the excess in volume brought by $T_2$ is eliminated by the term ``$- \mathrm{Vol}\,T^{(o)}_1$''.

In order to do so, let us denote by $\theta$ the dihedral angle along the common edge of the triangular prisms $T^{(o)}_1$ and $T^{(i)}_1$. Let $\ell_{\theta}$ be the length of this edge. Let $\gamma := \gamma_2$ and let $\ell_{\gamma}$ be the length of the vertical edge with dihedral angle $\gamma$. We know that $\frac{\partial V}{\partial \gamma} = -\frac{1}{2}\,\ell_{\gamma}$, by the structure of the volume formula for a prism truncated tetrahedron. Indeed, the function $V$ does not correspond to the volume of a real prism truncated tetrahedron any more, however all the metric relations defining the dihedral angles between the respective planes are preserved. Thus, after computing the derivative $\frac{\partial V}{\partial \ell}$ analogous to \cite{KM2012}, we obtain the latter equality. Now we compute the respective derivatives for the parts of the ``butterfly'' prism $T_1$. 

\begin{figure}[ht]
\begin{center}
\includegraphics* [totalheight=6cm]{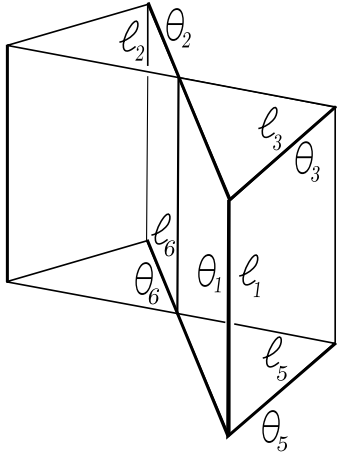}
\end{center}
\caption{Parametrising the ``butterfly'' prism depicted in Fig.~\ref{fig_prism_subdivision2}} \label{fig_butterfly_tetr}
\end{figure}

Observe that the parameter $\theta$ depends on $\gamma$, while we vary $\gamma$ and keep all other dihedral angles fixed. Let us denote $\hat{\gamma} = \pi - \gamma$ for brevity. We have that
\begin{equation*}
\frac{\partial \mathrm{Vol}\,T^{(o)}_1}{\partial \hat{\gamma}} = -\frac{\ell_{\gamma}}{2} -\frac{\ell_{\theta}}{2}\,\frac{\partial \theta}{\partial \hat{\gamma}}
\end{equation*}
and
\begin{equation*}
\frac{\partial \mathrm{Vol}\,T^{(i)}_{1}}{\partial \gamma} = -\frac{\ell_{\theta}}{2}\, \frac{\partial \theta}{\partial \gamma},
\end{equation*}
by the Schl\"{a}fli formula \cite[Equation~1]{Milnor}.

The above identities together with the fact that $\frac{\partial}{\partial \hat{\gamma}} = - \frac{\partial}{\partial \gamma}$ imply that
\begin{equation*}
\frac{\partial}{\partial \gamma_2}v(\alpha_1,\alpha_2,\beta_1,\beta_2,\gamma_2; \ell^{\star}) = \frac{\partial V}{\partial \gamma_2}.
\end{equation*}

By analogy, we can prove that 
\begin{equation*}
\frac{\partial}{\partial \xi}v(\alpha_1,\alpha_2,\beta_1,\beta_2,\gamma_2; \ell^{\star}) = \frac{\partial V}{\partial \xi},
\end{equation*}
for any $\xi \in \{ \alpha_1, \alpha_{n}, \beta_1, \beta_n, \mu_1 \}$. The volume formula for a prism truncated tetrahedron implies that by setting $\alpha_1 = \alpha_n = \pi/2$ and $\beta_1 = \beta_n = \pi/2$ we get $v(\alpha_1,\alpha_n,\beta_1,\beta_n,\gamma_2; \ell^{\star}) = 0$. In the case of a ``butterfly'' prism $T_1$, under the same assignment of dihedral angles, we have that the bases of the two triangular prisms become orthogonal to their lateral sides. Thus $T^{(i)}_1$ and $T^{(o)}_1$ degenerate into Euclidean prisms, which means that their volumes tend to zero. Thus, we obtain the identity $v(\alpha_1,\alpha_n,\beta_1,\beta_n,\gamma_2; \ell^{\star}) = V$. 

The proof of the monotonicity for the function $\frac{\partial\Phi}{\partial \ell}(\ell)$ is analogous to that in Proposition~\ref{prop_prism_volume1}. However, since the part $T^{(o)}_1$ contributes to the function $v(\alpha_1,\alpha_n,\beta_1,\beta_n,\gamma_2; \ell)$ with the negative sign, we have to replace the edge lengths $\ell_3$ and $\ell_5$ with $-\ell_3$ and $-\ell_5$, respectively, as shown in Fig.~\ref{fig_butterfly_tetr}. Then we recompute the respective derivatives of the lengths of the horizontal edges according to Theorem~\ref{thm_jacobian2}. We obtain that the lengths $\ell_2$ and $\ell_6$ diminish, as before, while the lengths $\ell_3$ and $\ell_5$ increase. This implies that the upper (resp., lower) triangular base of $T^{(i) \prime}_1$ can be placed entirely inside the upper (resp. lower) triangular base of $T^{(i)}_1$. By the area comparison argument, we have that $\mu^{\prime}_1 > \mu_1$. The inequality $\frac{\partial \mu_1}{\partial \ell} > 0$ follows. 

All other cases of ``butterfly prisms'' (e.g. that in Fig.~\ref{fig_prism_subdivision3}) can be considered by analogy.
\end{proof}

\medskip

\begin{remark}
In the general case, when the common perpendicular $p_{0}p_{n+1}$ does not lie entirely inside the prism $\Pi_n$, we expect that an analogue to Theorem~\ref{thm_prism_volume} holds with an exception that the equation $\frac{\partial \Phi}{\partial \ell}(\ell) = 0$ may have several solutions. However, one of these solutions is geometric and yields the volume of $\Pi_n$.
\end{remark}

\section{Modified volume formula}\label{volume-modified}

We modify the volume formula for a prism truncated tetrahedron from \cite{KM2012}, in order to reduce it to a simpler form. Indeed, the formula in \cite[Theorem~1]{KM2012} uses analytic continuation and accounts for possible branching with respect to any variable $a_j = e^{\ell}$, with some $j\in\{1,2,\dots,6\}$, and $a_k = e^{i\,\theta_k}$, for any $k\in\{1,2,\dots,6\}\setminus \{j\}$. Usually, we put $j=4$ for simplicity. However, the formula allows for intense truncation at any edge, since it is invariant under a permutation of the variables $a_l$, $l \in \{1,\dots,6\}$.

In our case, given a prism $\Pi_n$ and its decomposition into prism truncated tetrahedra $T_i$, $i\in\{1,\dots,n\}$, we know that only the common perpendicular $p_{0}p_{n+1}$ is produced by an intense truncation. Thus, we can always put $j=4$ and, moreover, the variable $a_4$ will be the only one that might cause branching. 

In this case, we suggest a simplified version of the formula from \cite[Theorem 1]{KM2012}. This formula also has less numeric discrepancies and performs faster, if used for an actual computation.

Let us put $a_k := e^{i\, \theta_k}$, $k\in \{1,2,3,5,6\}$, $a_4 := e^{\ell}$, and let $\mathscr{U} = \mathscr{U}(a_1,a_2,a_3,a_4,a_5,a_6,z)$ denote
\begin{eqnarray*}
\nonumber
\lefteqn{\mathscr{U} := \mathrm{Li}_2(z) + \mathrm{Li}_2(a_1a_2a_4a_5z) + \mathrm{Li}_2(a_1a_3a_4a_6z) + \mathrm{Li}_2(a_2a_3a_5a_6z)}\\
\nonumber && - \mathrm{Li}_2(-a_1a_2a_3z) - \mathrm{Li}_2(-a_1a_5a_6z) - \mathrm{Li}_2(-a_2a_4a_6z) - \mathrm{Li}_2(-a_3a_4a_5z),
\end{eqnarray*}
where $\mathrm{Li}_2(\circ)$ is the dilogarithm function.

Let $z_{-}$ and $z_{+}$ be two solutions to the equation $e^{z \frac{\partial \mathscr{U}}{\partial z}} = 1$ in the variable $z$. According to \cite{KM2012, MY}, these are
\begin{equation*}
z_{-} := \frac{-q_1-\sqrt{q^2_1-4q_0q_2}}{2q_2} \,\,\,\mbox{ and }\,\,\, z_{+} := \frac{-q_1+\sqrt{q^2_1-4q_0q_2}}{2q_2},
\end{equation*}
where
\begin{equation*}
q_0 := 1 + a_1a_2a_3 + a_1a_5a_6 + a_2a_4a_6 + a_3a_4a_5
+ a_1a_2a_4a_5 + a_1a_3a_4a_6 + a_2a_3a_5a_6,
\end{equation*}
\begin{eqnarray*}
\nonumber q_1 := -a_1 a_2 a_3 a_4 a_5 a_6 \bigg{(}\bigg{(}a_1-\frac{1}{a_1}\bigg{)}\bigg{(}a_4-\frac{1}{a_4}\bigg{)} + \bigg{(}a_2-\frac{1}{a_2}\bigg{)}\bigg{(}a_5-\frac{1}{a_5}\bigg{)}\\
+\bigg{(}a_3-\frac{1}{a_3}\bigg{)}\bigg{(}a_6-\frac{1}{a_6}\bigg{)}\bigg{)},
\end{eqnarray*}
\begin{eqnarray*}
\nonumber
q_2 := a_1 a_2 a_3 a_4 a_5 a_6 (a_1 a_4 + a_2 a_5 + a_3 a_6 + a_1a_2a_6 + a_1a_3a_5 + a_2a_3a_4 + \\a_4a_5a_6
+ a_1a_2a_3a_4a_5a_6).
\end{eqnarray*}

Given a function $f(x,y,\dots,z)$, let $f(x,y,\dots,z)\mid^{z=z_{-}}_{z=z_{+}}$ denote the difference $f(x,y,\dots,z_{-}) - f(x,y,\dots,z_{+})$. Now we define the following 
function $\mathscr{V} = \mathscr{V}(a_1,a_2,a_3,a_4,a_5,a_6,z)$ by means of the equality
\begin{equation*}
\mathscr{V} := \frac{i}{4}\left( \mathscr{U}(a_1,a_2,a_3,a_4,a_5,a_6,z) - z\, \frac{\partial \mathscr{U}}{\partial z}\, \log z \right)\bigg{\vert}^{z=z_{-}}_{z=z_{+}}.
\end{equation*}

\begin{proposition}\label{prop-volume}
The volume of a prism truncated tetrahedron $T$ is given by
\begin{equation*}
\mathrm{Vol}\,T = \Re\left( -\mathscr{V} + a_4 \frac{\partial \mathscr{V}}{\partial a_4}\,\log a_4 \right).
\end{equation*}
\end{proposition}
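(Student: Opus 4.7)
The proposition restates the volume formula of \cite[Theorem~1]{KM2012} in a form that highlights the asymmetry between $a_4 = e^{\ell}$ (real, coming from intense truncation along $e_4$) and the remaining $a_k = e^{i\theta_k}$, $k \in \{1,2,3,5,6\}$ (lying on the unit circle). The plan is to verify the identity by a Schl\"afli-type argument: show that the right-hand side has the correct partial derivatives in all of the $\theta_k$ and in $\ell$, and then fix the additive constant at a degenerate configuration of zero volume.

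Concretely, set $F := -\mathscr{V} + a_4\,(\partial\mathscr{V}/\partial a_4)\,\log a_4$, treat $z_\pm$ as implicit functions of $(a_1,\dots,a_6)$, and differentiate. The crucial observation is that the saddle-point equation $e^{z\,\partial\mathscr{U}/\partial z} = 1$ forces $z\,\partial\mathscr{U}/\partial z \in 2\pi i\,\mathbb{Z}$ at $z = z_\pm$, so implicit contributions from $\partial z_\pm/\partial a_k$ either cancel inside the difference $\vert^{z=z_-}_{z=z_+}$ or shift the result by a real multiple of $\pi^2$ that vanishes under $\Re$; accordingly, one may compute as if $z_\pm$ were constants. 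Applying the elementary identity $a\,\partial_a \mathrm{Li}_2(aw) = -\log(1-aw)$ and its variants to each of the eight dilogarithm summands of $\mathscr{U}$, each $a_k\,\partial F/\partial a_k$ collapses to a combination of logarithms. Via the first Cosine Law for faces together with the Gram matrix of Section~\ref{jacobian2}, these logarithms assemble into $-\ell_k/(2i)$ for the angle variables and, by the analytic continuation $a_{12} = i\ell$, $\ell_{12} = i\mu$ of Theorem~\ref{thm_jacobian2}, into $\mu/2$ for $a_4$. Taking real parts recovers exactly the Schl\"afli formula
\begin{equation*}
2\,\mathrm{d}\,\mathrm{Vol}\,T \;=\; -\sum_{k\neq 4}\ell_k\,\mathrm{d}\theta_k \;+\; \mu\,\mathrm{d}\ell
\end{equation*}
for a prism truncated tetrahedron.

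The additive constant is pinned down at a degenerate configuration: letting the discriminant $q_1^2 - 4 q_0 q_2$ tend to zero the roots $z_\pm$ merge, so that $\mathscr{V}$ and $F$ both vanish, while simultaneously $\mathrm{Vol}\,T \to 0$; the two sides therefore agree along any connected path in the parameter space. The delicate part of the argument is the branch analysis in the critical-point step: both $\log z_\pm$ and $\mathrm{Li}_2$ are multi-valued, and different branch choices shift $\mathscr{V}$ by real multiples of $\pi^2/2$ or by imaginary multiples of $\pi\,\log|\cdot|$. What makes this tractable here is precisely the hypothesis that only $a_4$ is real while each other $a_k$ lies on the unit circle: the admissible branch region is narrower than in the general formula of \cite[Theorem~1]{KM2012}, and the branch consistency established there transfers directly to the simplified expression defining $F$, which explains the improved numeric stability mentioned in the statement.
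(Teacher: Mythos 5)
Your overall plan --- verify Schl\"afli-type derivative identities for $F := \Re\bigl(-\mathscr{V} + a_4\,(\partial\mathscr{V}/\partial a_4)\log a_4\bigr)$ and pin down the additive constant at a degeneration --- is the same as the paper's, and your observation that the critical-point condition $z_{\pm}\,\partial\mathscr{U}/\partial z \in 2\pi i\,\mathbb{Z}$ kills the implicit contributions of $\partial z_{\pm}/\partial a_k$ is correct (it is exactly the mechanism used in the proof of Proposition~\ref{prop-angle}). But the one step that carries the whole content of this proposition --- the derivative in the length variable --- is wrong as you state it. The Schl\"afli formula for the prism truncated tetrahedron reads $2\,\mathrm{d}\,\mathrm{Vol}\,T = -\sum_{k}\ell_k\,\mathrm{d}\theta_k - \ell\,\mathrm{d}\mu$: the edge created by the intense truncation has length $\ell$ and dihedral angle $\mu$, so it contributes $-\tfrac12\,\ell\,\mathrm{d}\mu$, not $+\tfrac12\,\mu\,\mathrm{d}\ell$. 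The identity you display, $2\,\mathrm{d}\,\mathrm{Vol}\,T = -\sum_{k}\ell_k\,\mathrm{d}\theta_k + \mu\,\mathrm{d}\ell$, is the differential of $2\bigl(\mathrm{Vol}\,T + \tfrac12\mu\ell\bigr)$; it is what the mild-case Schl\"afli identity formally becomes under $a_{12}=i\ell$, $\ell_{12}=i\mu$, but the function satisfying it is the Legendre transform of the volume, not the volume. Consistently with this confusion, your claim that the $a_4$-logarithms ``assemble into $\mu/2$'' holds for $\Re(-\mathscr{V})$ alone, not for $F$: the extra summand $a_4(\partial\mathscr{V}/\partial a_4)\log a_4$ is precisely the correction that trades $\partial_\ell\Re(-\mathscr{V})\equiv\mu/2$ for $\partial F/\partial\mu=-\ell/2$. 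Had your computation really produced $\partial F/\partial\ell=\mu/2$ together with $\partial F/\partial\theta_k=-\ell_k/2$, you would be forced to conclude $F=\mathrm{Vol}\,T+\tfrac12\mu\ell+\mathrm{const}$, contradicting the statement, since $\tfrac12\mu\ell$ does not vanish at your normalising configuration.

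The paper's proof handles exactly this point: it computes $\partial_\ell\bigl(f(T)+\tfrac12\mu\ell\bigr)=\tfrac{\mu}{2}$, using the fact that $\Re\bigl(a_4\,\partial\mathscr{V}/\partial a_4\bigr)+\mu/2$ has an a.e.\ vanishing derivative (the remark after \cite[Theorem~1]{KM2012}), and then reads this as $\partial f/\partial\mu=-\ell/2$, which is the correct Schl\"afli relation for the volume; the $\theta_k$-derivatives and the limit are then quoted from \cite{KM2012}. You need to insert this Legendre-transform step. A secondary issue: normalising at the merging of $z_-$ and $z_+$ does make $\mathscr{V}$ and $F$ vanish, but you have not argued that $\mathrm{Vol}\,T\to 0$ there; the paper instead degenerates $T$ to a right Euclidean prism, where both sides demonstrably tend to zero.
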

\begin{proof}
Let us denote
\begin{equation*}
f(T) = \Re\left( -\mathscr{V} + a_4 \frac{\partial \mathscr{V}}{\partial a_4}\,\log a_4 \right),
\end{equation*}
and compute the derivative
\begin{equation*}
\frac{\partial}{\partial \ell}\left( f(T) + \frac{\mu\,\ell}{2} \right) = a_4\,\frac{\partial}{\partial a_4}\left( f(T) + \frac{\mu\,\log a_4}{2} \right) = 
\end{equation*}
\begin{equation*}
= a_4\,\frac{\partial}{\partial a_4}\left( \Re\left( -\mathscr{V} + \left( a_4\,\frac{\partial \mathscr{V}}{\partial a_4} + \frac{\mu}{2} \right)\, \log a_4 \right) \right).
\end{equation*}

The function $\Re \left( a_4\,\frac{\partial \mathscr{V}}{\partial a_4} + \frac{\mu}{2} \right)$ has an a.e. vanishing derivative, c.f. the note in \cite{KM2012} after Theorem 1 saying that $\mu \equiv -2\, \Re (a_4\,\frac{\partial \mathscr{V}}{\partial a_4}) \mod \pi$. Hence,
\begin{equation*}
\frac{\partial}{\partial \ell}\left( f(T) + \frac{\mu\,\ell}{2} \right) = a_4\,\frac{\partial}{\partial a_4}\left( \Re\left( -\mathscr{V} + \left( a_4\,\frac{\partial \mathscr{V}}{\partial a_4} + \frac{\mu}{2} \right)\, \log a_4 \right) \right) \overbrace{=}^{(1)}
\end{equation*}
\begin{equation*}
\overbrace{=}^{(1)} \Re\left( -a_4\,\frac{\partial\mathscr{V}}{\partial a_4} + a_4\,\frac{\partial\mathscr{V}}{\partial a_4} + \frac{\mu}{2} \right) = \frac{\mu}{2}.
\end{equation*}
The equality (1) holds because of the commutativity of the operations $\Re$ and $\frac{\partial}{\partial a_4}$ for the function $-\mathscr{V} + \left( a_4\,\frac{\partial \mathscr{V}}{\partial a_4} + \frac{\mu}{2} \right)\, \log a_4$. The latter holds since $a_4 = e^\ell$ is a real parameter. 

This implies that $\frac{\partial f(T)}{\partial \mu} = -\frac{\ell}{2}$. By analogy to the proof of \cite[Theorem~1]{KM2012}, we can show that $\frac{\partial f(T)}{\partial \theta_k} = -\frac{\ell_k}{2}$, and that if $T$ degenerates into a right Euclidean prism, then $f(T) \rightarrow 0$. Thus, $\mathrm{Vol}\,T = f(T)$ and the proposition follows. 
\end{proof}

Also, we have the following way to determine the dihedral angle $\mu$ along the length $\ell$ edge coming from the intense truncation.

\begin{proposition}\label{prop-angle}
The angle $\mu$ is given by 
\begin{equation*}
\mu \equiv -\Re \left( \frac{i\,a_4}{2}\, \frac{\partial\mathscr{U}(a_1,\dots,a_6,z)}{\partial a_4}\bigg{\vert}^{z=z_{-}}_{z=z_{+}} \right)\, \mod \pi.
\end{equation*}
\end{proposition}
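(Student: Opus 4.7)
The plan is to reduce the question to the identity $\mu \equiv -2\,\Re\bigl(a_4\,\partial\mathscr{V}/\partial a_4\bigr) \mod \pi$ recalled from \cite{KM2012} during the proof of Proposition~\ref{prop-volume}, and then to establish the explicit evaluation
\begin{equation*}
\frac{\partial \mathscr{V}}{\partial a_4} \;=\; \frac{i}{4}\,\frac{\partial \mathscr{U}}{\partial a_4}\bigg|^{z=z_-}_{z=z_+}.
\end{equation*}
Once this is in hand, multiplying by $-2\,a_4$, taking real parts and reducing modulo $\pi$ gives exactly the claimed formula.

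To establish this reduction, I would regard $z_{\pm}$ as implicit functions of $(a_1,\dots,a_6)$ defined by $e^{z\,\partial_z\mathscr{U}} = 1$, set $F := \mathscr{U} - z\,\partial_z\mathscr{U}\cdot \log z$ so that $\mathscr{V} = \tfrac{i}{4}\,[F|_{z=z_-} - F|_{z=z_+}]$, and apply the chain rule along the critical-point locus:
\begin{equation*}
\frac{d}{da_4}F(a,z_\pm) \;=\; \frac{\partial F}{\partial a_4}\bigg|_{z=z_\pm} + \frac{\partial F}{\partial z}\bigg|_{z=z_\pm}\cdot \frac{\partial z_\pm}{\partial a_4}.
\end{equation*}
A direct computation gives $\partial_z F = -\partial_z(z\,\partial_z\mathscr{U})\,\log z$ and $\partial_{a_4} F = \partial_{a_4}\mathscr{U} - \partial_{a_4}(z\,\partial_z\mathscr{U})\,\log z$, so that the coefficient of $\log z_\pm$ in the total $a_4$-derivative is precisely
\begin{equation*}
\frac{d}{da_4}\bigl(z\,\partial_z\mathscr{U}\bigr)\bigg|_{z=z_\pm(a)}.
\end{equation*}
But the defining relation forces $(z\,\partial_z\mathscr{U})|_{z=z_\pm}$ to sit inside the discrete set $2\pi i\,\mathbb{Z}$, so on any connected component of the parameter space this quantity is locally constant in $a_4$ and its total derivative vanishes. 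Hence the $\log z_\pm$ terms cancel and we are left with $\frac{d}{da_4}F(a,z_\pm) = \partial_{a_4}\mathscr{U}|_{z=z_\pm}$. Taking the difference of the two endpoints yields the desired expression for $\partial_{a_4}\mathscr{V}$.

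The main obstacle will be the bookkeeping of branches. The integer-valued quantities $n_\pm := \tfrac{1}{2\pi i}(z\,\partial_z\mathscr{U})|_{z=z_\pm}$ are constant only per connected component of the parameter domain; a monodromy move in $(a_1,\dots,a_6)$ may swap sheets of $z_\pm$, $\log z_\pm$, or of the dilogarithms defining $\mathscr{U}$, and shift $\mu$ by an integer multiple of $\pi$. This is precisely why the conclusion is stated modulo $\pi$, and it is also the only place where one must be careful about which branches of $\log$ and $\mathrm{Li}_2$ are in play. Apart from this, the argument is a short telescoping computation that parallels, and is in fact simpler than, the corresponding step in the proof of Proposition~\ref{prop-volume}.
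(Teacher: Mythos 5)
Your proposal is correct and follows essentially the same route as the paper: reduce to $\mu \equiv -2\,\Re(a_4\,\partial\mathscr{V}/\partial a_4) \bmod \pi$ and then show that the $\log z_{\pm}$ contributions cancel because $(z\,\partial_z\mathscr{U})|_{z=z_{\pm}} \in 2\pi i\,\mathbb{Z}$ is locally constant along the critical locus, leaving $\partial_{a_4}\mathscr{V} = \tfrac{i}{4}\,\partial_{a_4}\mathscr{U}\,|^{z=z_-}_{z=z_+}$. Your extra remarks on branch bookkeeping are a sound elaboration of why the statement is only claimed modulo $\pi$, but they do not change the substance of the argument.
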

\begin{proof}
We have $\mu \equiv - 2 \Re\left( a_4\,\frac{\partial \mathscr{V}}{\partial a_4} \right)$ mod $\pi$, where $0 < \mu < \pi$ and has an a.e. vanishing derivative.  

Then we compute 
\begin{equation*}
\frac{\partial \mathscr{U}(a_1,\dots,a_6,z_{\pm}(a_1,\dots,a_6))}{\partial a_4} - \frac{\partial}{\partial a_4}\left( z_{\pm}\,\frac{\partial \mathscr{U}(a_1,\dots,a_6,z_{\pm})}{\partial z}\,\log z_{\pm} \right) = 
\end{equation*}
\begin{eqnarray*}
\nonumber
\frac{\partial \mathscr{U}(a_1,\dots,a_6,z_{\pm})}{\partial a_4} + \frac{\partial z_{\pm}}{\partial a_4}\,\frac{\partial \mathscr{U}(a_1,\dots,a_6,z_{\pm})}{\partial z}\\ 
&\hspace*{-0.77in}- \frac{\partial z_{\pm}}{\partial a_4}\,\frac{\partial \mathscr{U}(a_1,\dots,a_6,z_{\pm})}{\partial z} =\\
\frac{\partial \mathscr{U}(a_1,\dots,a_6,z_{\pm})}{\partial a_4}, 
\end{eqnarray*}
since, for some $m\in \mathbb{Z}$,
\begin{equation*}
z_{\pm}\,\frac{\partial \mathscr{U}(a_1,\dots,a_6,z_{\pm})}{\partial z} = 2\pi\, i\, m,
\end{equation*} 
by the definition of $z_{-}$ and $z_{+}$.

Therefore, we obtain
\begin{equation*}
\mu \equiv - 2 \Re \left( a_4\, \frac{\partial \mathscr{V}}{\partial a_4} \right) \hspace*{-0.1in} \mod \pi \equiv - \Re \left( \frac{i\,a_4}{2}\, \frac{\partial\mathscr{U}(a_1,\dots,a_6,z)}{\partial a_4}\bigg{\vert}^{z=z_{-}}_{z=z_{+}} \right) \hspace*{-0.1in} \mod \pi,
\end{equation*}
where $0 < \mu < \pi$. 
\end{proof}

\section{Numerical examples}

Finally, we produce some numerical examples concerning an $n$-gonal ($n\geq 5$) prism $\Pi_n$ with the following distribution of dihedral angles: the angles along the vertical edges are $\frac{2\pi}{5}$, the angles adjacent to the bottom face are $\frac{\pi}{3}$, and those adjacent to the top face are $\frac{\pi}{2}$. Indeed, such a prism $\Pi_n$ exists due to \cite[Theorem~1.1]{HR}. Then we apply Theorem~\ref{thm_prism_volume} for the cases $n=5,6,7$, and perform all necessary numeric computations with Wolfram Mathematica\textsuperscript{\textregistered}. 

In order to avoid excessive branching in numerical computations, we use the modified parameters
\begin{equation*}
q^\prime_i := \frac{q_i}{\Pi^{6}_{k=1}\, a_k}\,\, \mbox{ and }\,\, z_{\pm} := \frac{-q^\prime_1-\sqrt{q^{\prime 2}_1 \pm 4q^\prime_0q^\prime_2}}{2q^\prime_2}. 
\end{equation*}
in the formulae for $\mathscr{U}$ and $\mathscr{V}$ from Section \ref{volume-modified}. 

It follows from the definition of $q^\prime_i$, $i=1,2,3$, above that the quantity $q^{\prime 2}_1-4q^\prime_0q^\prime_2$ is a real number, c.f. \cite[Section 1.1, Lemma]{MY}. This fact prevents computational discrepancies and simplifies any further numerical analysis of the volume formula. 

\begin{table}[ht]
\begin{center}
\begin{tabular}{|c|c|c|}
\hline 
$n$& $(\ell^{\star}, \mu)$& $\mathrm{Vol}\,\Pi_n$ \\
\hline
$5$& $(0.50672, 2\pi/5)$& $2.63200$\\
$6$& $(0.38360, \pi/3)$& $3.43626$\\
$7$& $(0.312595, 2\pi/7)$& $4.19077$\\
\hline
\end{tabular}
\caption{Left: parameters $(\ell^{\star},\mu)$ of $T_n$, right: volume of $\Pi_n$}
\end{center}
\end{table}

Each of the above prisms $\Pi_n$ can be subdivided into $n$ isometric copies of a prism truncated tetrahedron $T_n$. Indeed, $T_n$ is a prism truncated tetrahedron with angles $\theta_1 = \frac{2\pi}{5}$, $\theta_2 = \theta_3 = \frac{\pi}{2}$, $\theta_5 = \theta_6 = \frac{\pi}{3}$, and $\mu = \frac{2\pi}{n}$. By rotating it along the edge with dihedral angle $\mu$, we compose the desired prism $\Pi_n$. 

The graph of $\mathrm{Vol}\,T_n$, with $n=5$, as a function of $\ell$, is shown in Fig.~\ref{fig:Prism-Test} on the left. The graph of $\frac{\partial\Phi}{\partial \ell}(\ell)$ for the same prism truncated tetrahedron $T_n$ is depicted in Fig.~\ref{fig:Prism-Test} on the right. We observe that the function $\frac{\partial\Phi}{\partial \ell}(\ell)$ is indeed monotone and has a single zero $\ell^{\star} \approx 0.50672...$.

\begin{figure}[ht]
\centering
\begin{minipage}{5.5cm}
\includegraphics* [totalheight=3.0cm]{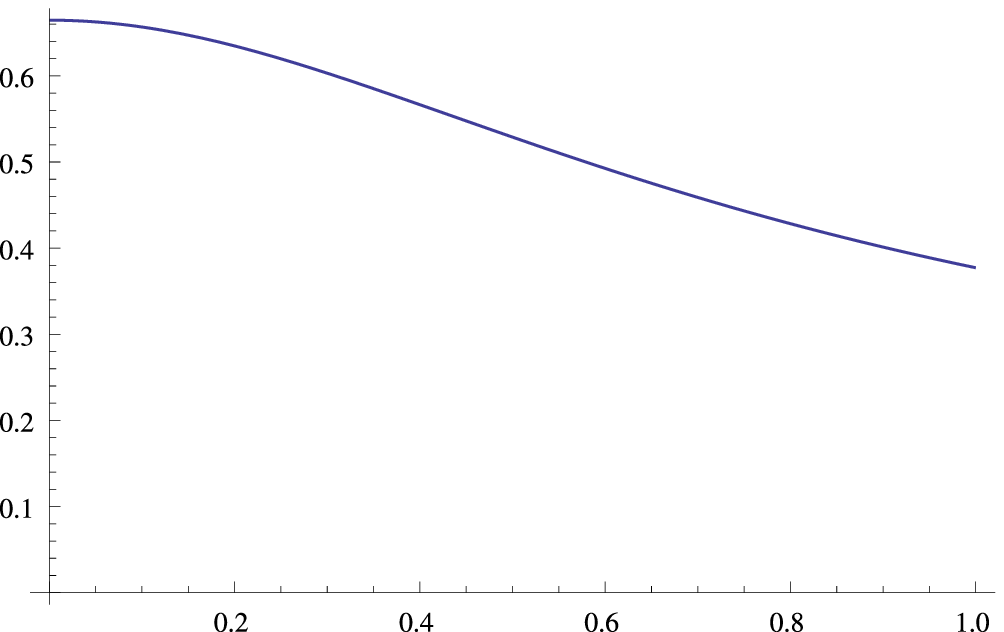}
\end{minipage}
\qquad
\begin{minipage}{5.5cm}
\includegraphics* [totalheight=3.0cm]{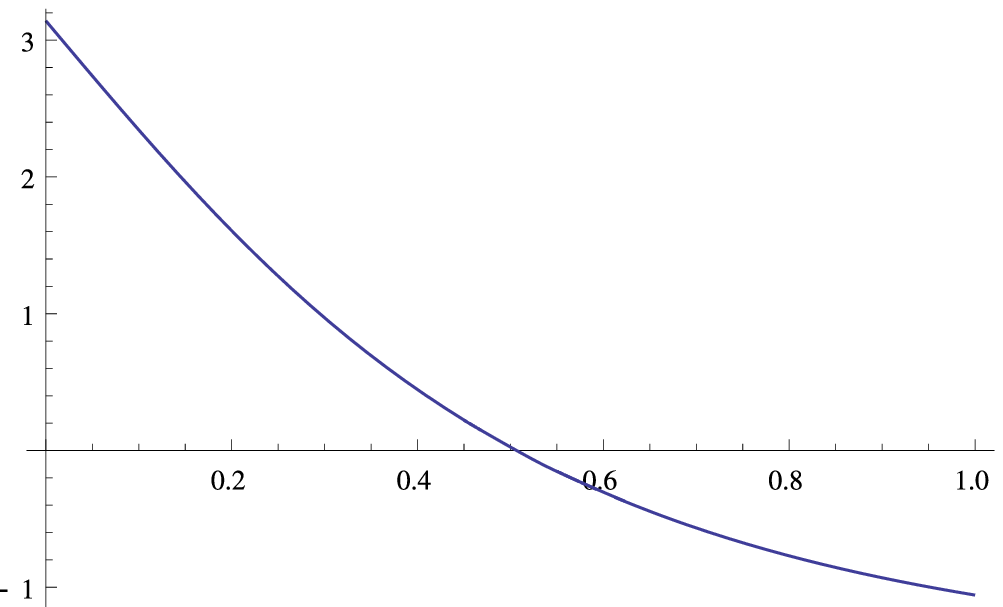}
\end{minipage}
\caption{Left: $\mathrm{Vol}\,T_5$, right: $\frac{\partial\Phi}{\partial \ell}$, both as functions of $\ell$}
\label{fig:Prism-Test}
\end{figure}

The volume of $T_5$ with $\theta_1 = \frac{2\pi}{5}$, $\theta_2 = \theta_3 = \frac{\pi}{2}$, $\theta_5 = \theta_6 = \frac{\pi}{3}$ and $\ell^{\star} \approx 0.50672...$ equals $\sim 0.52639...$ by Proposition~\ref{prop-volume}. Thus, we can see that $\mathrm{Vol}\,\Pi_5 = 5\cdot \mathrm{Vol}\, T_5$ in accordance with Theorem~\ref{thm_prism_volume}, and from Proposition~\ref{prop-angle} $\mu = 1.25664... \approx \frac{2\pi}{5}$.

\newpage

\begin{flushleft}
\textit{
Alexander Kolpakov\\
Department of Mathematics\\
University of Toronto\\
40 St. George Street\\
Toronto ON\\
M5S 2E4 Canada\\
kolpakov.alexander(at)gmail.com}
\end{flushleft}

\medskip

\begin{flushleft}
\textit{
Jun Murakami\\
Department of Mathematics\\
Faculty of Science and Engineering\\
Waseda University\\
3-4-1 Okubo Shinjuku-ku\\ 
169-8555 Tokyo, Japan\\
murakami(at)waseda.jp}
\end{flushleft}

\end{document}